\def\N{\mathbb{N}}
\def\F{\mathbb{F}}
\newtheorem{theorem}{Theorem}[section]
\newtheorem{corollary}[theorem]{Corollary}
\newtheorem{lemma}[theorem]{Lemma}
\newtheorem{remark}[theorem]{Remark}
\newtheorem{conjecture}[theorem]{Conjecture}
\begin{document}

\title{Factorization of cyclotomic polynomial values at Mersenne primes}
\author{Gallardo Luis H. - Rahavandrainy Olivier  \\
Universit\'e de Brest, UMR CNRS 6205\\
Laboratoire de Math\'ematiques de Bretagne Atlantique\\
e-mail : luisgall@univ-brest.fr - rahavand@univ-brest.fr}
\maketitle
\begin{itemize}
\item[a)]
Keywords: Cyclotomic polynomials, sum of divisors, finite fields
\item[b)]
Mathematics Subject Classification (2010): 11T55, 11T06.
\item[c)]
Corresponding author: O. Rahavandrainy
\begin{center}
\end{center}
\end{itemize}

\newpage~\\
{\bf{Abstract}}\\
We get some results about the factorization of $\phi_p(M) \in \F_2[x]$, where $p$ is a prime number, $\phi_p$ is the corresponding cyclotomic polynomial and $M$ is a Mersenne prime (polynomial). By the way, we better understand the factorization of
the sum of the divisors of $M^{2h}$, for a positive integer $h$.

{\section{Introduction}}
We define a \emph{Mersenne prime} (polynomial) over $\F_2$ as an irreducible polynomial of the form
$1+x^a(x+1)^b$, for some positive integers $a,b$. In that case: $\gcd(a,b)=1$ and ($a$ or $b$ is odd).\\
For a nonzero polynomial $A \in \F_2[x]$, let $\omega(A)$ (resp. $\sigma(A)$) denote the number of all its distinct irreducible  factors (resp. the sum of all its divisors). Recall that $\sigma$ is a multiplicative
function.\\
If $\sigma(A) = A$, then we say that $A$ is \emph{perfect}. In order to characterize all perfect polynomials over $\F_2$, Conjecture \ref{oldconject} plays an important role. That is why we try to prove it, since a few years (see e.g., \cite{Gall-Rahav12}, \cite{Gall-Rahav-mersenn} and \cite{Gall-Rahav14}).\\
Let $\phi_p$ be the cyclotomic polynomial associated to a prime number $p$. We see that: $\phi_p(M) = 1+M+\cdots + M^{p-1} = \sigma(M^{p-1})$. Therefore, (new) results about the factorization of $\phi_p(M)$ would be useful to prove Conjecture \ref{oldconject}.

It is convenient to fix some notations.\\
{\bf{Notations}}\\
$\bullet$ For $S \in \F_2[x]$, we denote by:\\
- $\overline{S}$ the polynomial obtained from $S$ with $x$ replaced by
$x+1$: $\overline{S}(x) = S(x+1)$,\\
- $\alpha_l(S)$ the coefficient of $x^{s-l}$ in $S$, $0\leq l \leq s.$ One has: $\alpha_0(S) =1$.\\
- $\N$ (resp. $\N\sp{*}$) the set of nonnegative
integers (resp. of positive integers).\\
$\bullet$ For a fixed prime Mersenne $M:=1+x^a(x+1)^b$ and for a prime number $p:=2h+1$, we set:
$$\begin{array}{l}
U_{2h}:=\sigma(\sigma(M^{2h})),\ W:=W_M= U_{2h} + \sigma(M^{2h}) + 1,\ R:=R_M= \sigma(M^{2h-1}) + W,\\
\deg_M:=\deg(M)=a+b, \ c:= c_M= 2h \deg_M - \deg(W).
\end{array}$$
By Corollary \ref{candm}, we consider, for $p \geq 5$, the following sets of Mersenne primes:
$$\begin{array}{l}
\Sigma_{p}^1:=\{M: c_M = \deg_M\},\ \Sigma_{p}^2:=\{M: c_M = \deg_M +1, c_M \text{ odd}\},\\
\Sigma_{p}^3:=\{M: c_M +m_M = \deg_M,  c_M \text{ even}\},\\
\Sigma_{p}^4:=\{M: c_M +m_M = \deg_M +1,  c_M \text{ even and $m_M \geq 3$}\},\\
\text{and $\Sigma_p := \Sigma_{p}^1 \cup \Sigma_{p}^2 \cup \Sigma_{p}^3 \cup \Sigma_{p}^4$.}
\end{array}$$
We shall prove the following result:
\begin{theorem} \label{result0}
Let $p \geq 5$ be a prime number. Then, for any $M \not\in \Sigma_p$, $\phi_p(M)=\sigma(M^{p-1})$ is divisible by a non Mersenne prime.
\end{theorem}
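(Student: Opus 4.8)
The plan is to prove the contrapositive: assuming every irreducible factor of $\phi_{p}(M)=\sigma(M^{2h})$ is a Mersenne prime, I will deduce $M\in\Sigma_{p}$. First one checks that $\sigma(M^{2h})$ is squarefree: since $\bigl(\sigma(M^{2h})\bigr)'=M'\sum_{j=0}^{h-1}M^{2j}=M'\,\sigma(M^{h-1})^{2}$, and $\sigma(M^{2h})$ is prime to $x(x+1)$ (indeed $\sigma(M^{2h})(0)=\sigma(M^{2h})(1)=1$) hence prime to $M'$ (whose only irreducible factors are $x$ and $x+1$), and prime to $\sigma(M^{h-1})$ (because $\sigma(M^{2h})\equiv M^{2h}\pmod{\sigma(M^{h-1})}$ while $\gcd(M,\sigma(M^{h-1}))=1$), we get $\gcd(\sigma(M^{2h}),\bigl(\sigma(M^{2h})\bigr)')=1$. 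Thus $\sigma(M^{2h})=N_{1}\cdots N_{t}$ with $N_{i}=1+x^{a_{i}}(x+1)^{b_{i}}$ distinct Mersenne primes; moreover $t\ge 2$, because $t=1$ would force $\sigma(M^{2h})+1=x^{a_{1}}(x+1)^{b_{1}}$, contradicting $\sigma(M^{2h})+1=M\,\sigma(M^{2h-1})$, which is divisible by the Mersenne prime $M\notin\{x,x+1\}$. Since $\sigma$ is multiplicative and $1+N_{i}=x^{a_{i}}(x+1)^{b_{i}}$, this yields the key identity
\[
U_{2h}=\sigma\bigl(\sigma(M^{2h})\bigr)=\prod_{i=1}^{t}(1+N_{i})=x^{A}(x+1)^{B},\qquad A=\textstyle\sum_{i}a_{i},\ \ B=\textstyle\sum_{i}b_{i},\ \ A+B=2h\deg_{M}.
\]

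Next I would feed this into the elementary identities over $\F_{2}$: $M^{2h}+1=(M+1)^{2}\sigma(M^{h-1})^{2}=x^{2a}(x+1)^{2b}\sigma(M^{h-1})^{2}$, $\ \sigma(M^{2h-1})=x^{a}(x+1)^{b}\sigma(M^{h-1})^{2}$, $\ \sigma(M^{2h})+1=M\,\sigma(M^{2h-1})$, and $\sigma(M^{2h})=\sigma(M^{2h-1})+M^{2h}$, the last of which turns $R_{M}=\sigma(M^{2h-1})+W$ into $R_{M}=U_{2h}+M^{2h}+1$, so that
\[
W=x^{A}(x+1)^{B}+M\,\sigma(M^{2h-1})+1,\qquad R_{M}=x^{A}(x+1)^{B}+x^{2a}(x+1)^{2b}\sigma(M^{h-1})^{2}+1.
\]
Both $x^{A}(x+1)^{B}$ and $\sigma(M^{2h})$ are monic of degree $2h\deg_{M}$; one has $\alpha_{l}\bigl(x^{A}(x+1)^{B}\bigr)=\binom{B}{l}$, while the identities give $\alpha_{l}\bigl(\sigma(M^{2h})\bigr)=\binom{2hb}{l}$ for $0\le l<\deg_{M}$ and $\alpha_{\deg_{M}}\bigl(\sigma(M^{2h})\bigr)=\binom{2hb}{\deg_{M}}+1$. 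As $W\ne 0$ (same factor-$M$ argument) and $c_{M}=2h\deg_{M}-\deg W$ is exactly the index of the first coefficient at which $x^{A}(x+1)^{B}$ and $\sigma(M^{2h})$ disagree, while the companion invariant $m_{M}$ of Corollary \ref{candm} records the analogous finer drop carried by $R_{M}$, the pair $(c_{M},m_{M})$ is completely determined by the low binary digits of $B$ (equivalently of $A=2h\deg_{M}-B$) relative to those of $2hb$.

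The remaining task is to constrain $B=\sum_{i}b_{i}$, using two inputs. First, each $N_{i}$ divides $M^{p}+1=(M+1)\sigma(M^{2h})$ and, being prime to $x(x+1)$ of degree $\ge 2$, has $M$ of multiplicative order exactly $p$ modulo it; hence $\deg N_{i}$ is a positive multiple of $d:=\mathrm{ord}_{p}(2)\ge 3$, with $\sum_{i}\deg N_{i}=2h\deg_{M}$. Second, comparing the $(x+1)$-adic expansion of $\sigma(M^{2h})=1+x^{a}(x+1)^{b}M\,\sigma(M^{h-1})^{2}$ with that of $\prod_{i}\bigl(1+(x+1)^{b_{i}}x^{a_{i}}\bigr)$, and symmetrically in $x$, pins down $\sum_{i}b_{i}$ (resp.\ $\sum_{i}a_{i}$) modulo powers of $2$ up to a short explicit list of low-order cancellation patterns among the $N_{i}$ of smallest $b_{i}$ (resp.\ $a_{i}$). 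Substituting these possibilities into the coefficient comparison of the previous paragraph, the generic outcome is that the first disagreement occurs precisely at position $\deg_{M}$ --- forced by the extra $+1$ in $\alpha_{\deg_{M}}(\sigma(M^{2h}))$ --- so $c_{M}=\deg_{M}$ and $M\in\Sigma_{p}^{1}$; each exceptional pattern either moves the first disagreement to position $\deg_{M}+1$ with $c_{M}$ necessarily odd ($M\in\Sigma_{p}^{2}$), or produces an even $c_{M}$ together with precisely the value of $m_{M}$ placing $M$ in $\Sigma_{p}^{3}$ or $\Sigma_{p}^{4}$. In every case $M\in\Sigma_{p}$.

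The hard part is this last step: controlling how the (possibly many) Mersenne factors can conspire in their low-order $x$- and $(x+1)$-adic terms so as to keep $\prod_{i}(1+x^{a_{i}}(x+1)^{b_{i}})\equiv 1\pmod{x^{a}(x+1)^{b}}$, enumerating the surviving patterns, and reading off $(c_{M},m_{M})$ for each --- the delicate coefficient bookkeeping encapsulated in the preliminary lemmas and Corollary \ref{candm}. By contrast, squarefreeness, the identity $U_{2h}=x^{A}(x+1)^{B}$, the relation $R_{M}=U_{2h}+M^{2h}+1$, and the two displayed formulas for $W$ and $R_{M}$ are routine algebra.
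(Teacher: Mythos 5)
Your preparatory steps are correct and essentially reproduce the paper's preliminaries: squarefreeness of $\sigma(M^{2h})$, the exclusion of a single Mersenne factor, the identity $U_{2h}=\sigma(\sigma(M^{2h}))=x^{A}(x+1)^{B}$ with $A+B=2h\deg_M$, the nonvanishing of $W$, and the rewriting $R_M=U_{2h}+M^{2h}+1$. But from that point on you give a plan, not a proof. The assertion that the ``low-order cancellation patterns'' among the factors $N_i$ can be enumerated, and that every surviving pattern forces either $c_M=\deg_M$, or $c_M=\deg_M+1$ with $c_M$ odd, or an even $c_M$ together with exactly the value of $m_M$ required by $\Sigma_p^{3}$ or $\Sigma_p^{4}$, is precisely the statement of the theorem in contrapositive form, and nothing in your text substantiates it: no mechanism is given to bound the number of patterns, to decide where the first coefficient disagreement between $x^A(x+1)^B$ and $\sigma(M^{2h})$ falls when several $N_i$ share small $a_i$ or $b_i$, or to compute $m_M$ in any case (note also that $m_M$ is the least odd index with $\alpha_{m}(W)=1$, as in Lemma \ref{alfamW}-iv), not a ``finer drop carried by $R_M$''). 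Declaring this the ``hard part'' and leaving it undone means the theorem is not proved.

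What your sketch misses is the parity mechanism that makes the paper's proof short and avoids any enumeration of factorizations. From the earlier work one knows that $A=\sum_i a_i$ and $B=\sum_i b_i$ are both even, so $U_{2h}=x^A(x+1)^B$ is a square and $\alpha_k(U_{2h})=0$ for every odd $k$ (Corollary \ref{sommaieven}); likewise $R=U_{2h}+1+M^{2h}$ is a square while $W$ is not (Lemma \ref{alfamW}). For $M\notin\Sigma_p$, Corollaries \ref{aboutceab} and \ref{candm} leave only three configurations, namely $c\geq \deg_M+2$, $c+m<\deg_M$, and $c<\deg_M<c+m$, and in each one Lemma \ref{alphadegM} combined with Corollary \ref{alfaplusb} exhibits an odd index $\ell$ with $\alpha_{\ell}(U_{2h})=1$ or $\alpha_{\ell}(R)=1$, contradicting the squareness just mentioned. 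Your route (pinning down $B$ through $x$- and $(x+1)$-adic comparisons and classifying the exceptional patterns) would amount to a much stronger structural result than the theorem requires, and as written there is no evidence it can be carried out; either execute that classification in full or replace it by the parity/square argument above.
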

Theorem \ref{result0} is a new step on the proof of a Conjecture about Mersenne primes that is discussed in recent papers (\cite{Gall-Rahav-mersenn}, \cite{Gall-Rahav14}):

\begin{conjecture} [Conjecture 5.2 in \cite{Gall-Rahav12}] \label{oldconject}
Let $h \in \N^*$ and $M$ be a Mersenne prime such that $\text{$(M \not\in \{1+x+x^3, 1+x^2+x^3\}$ or $h \geq 2)$}$. Then, the polynomial $\sigma(M^{2h})$ is divisible by a non Mersenne prime.
\end{conjecture}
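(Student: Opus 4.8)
The plan is to derive the Conjecture from Theorem~\ref{result0} through a divisibility reduction, reserving direct treatment for the handful of cases that Theorem~\ref{result0} does not reach, namely the prime $p=3$ and the membership $M\in\Sigma_p$. The driving observation is elementary: since $M$ is irreducible, $\sigma(M^{n-1})=(M^n-1)/(M-1)$, so $d\mid n$ forces $(M^d-1)\mid(M^n-1)$ and hence $\sigma(M^{d-1})\mid\sigma(M^{n-1})$. Taking $n=2h+1$ and $d=p$ an (odd) prime divisor of $2h+1$ gives $\phi_p(M)=\sigma(M^{p-1})\mid\sigma(M^{2h})$. Thus, to exhibit a non Mersenne prime factor of $\sigma(M^{2h})$ it suffices to exhibit one of $\phi_p(M)$ for a single prime $p\mid 2h+1$. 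Whenever $2h+1$ has a prime divisor $p\geq5$ with $M\notin\Sigma_p$, Theorem~\ref{result0} produces such a factor at once.

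Two residual situations remain. First, if the sole prime dividing $2h+1$ is $3$, i.e. $2h+1=3^k$, the reduction yields only $\phi_3(M)=\sigma(M^2)=1+M+M^2$. For $k=1$ this is the excluded base case; here I would argue directly, assuming $\sigma(M^2)$ splits as a product of Mersenne primes $1+x^{c}(x+1)^{d}$ and comparing the degree relation $\deg\sigma(M^2)=2\deg_M$ with the constant-term and leading-coefficient data (the invariants $\alpha_l$) to pin $M$ down to $\{1+x+x^3,\,1+x^2+x^3\}$. For $k\geq2$ one has $h\geq4$, so an exceptional $M$ is already covered by the hypothesis $h\geq2$; there, since the base factor $\sigma(M^2)$ is all Mersenne for the two listed polynomials, I would instead use the further divisor $9$, namely $\phi_9(M)=1+M^3+M^6\mid\sigma(M^{2h})$, and verify by a finite computation that this factor is non Mersenne. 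Second, it may happen that $M\in\Sigma_p$ for every prime $p\geq5$ dividing $2h+1$ (in particular when $2h+1$ is itself such a prime, so that $\sigma(M^{2h})=\phi_p(M)$ exactly). For these I would analyse $\phi_p(M)$ directly on each of the four sets $\Sigma_p^1,\dots,\Sigma_p^4$, seeking a finer coefficient obstruction to an all-Mersenne factorization than the generic one behind Theorem~\ref{result0}.

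The decisive difficulty is exactly this last step. The sets $\Sigma_p$ were carved out because the degree-and-coefficient argument underlying Theorem~\ref{result0} degenerates precisely when the borderline equalities $c_M=\deg_M$ (for $\Sigma_p^1,\Sigma_p^2$) and $c_M+m_M=\deg_M$ (for $\Sigma_p^3,\Sigma_p^4$) hold, so closing them demands a genuinely new input — most plausibly sharpened control of the next coefficient $\alpha_2(\phi_p(M))$ or of $\omega(\phi_p(M))$, or an arithmetic argument showing those equalities are incompatible with a complete splitting into Mersenne primes. Until such an obstruction is found, the statement stays a conjecture, in agreement with its status in the excerpt; Theorem~\ref{result0} is exactly the part of this plan that can be carried out unconditionally.
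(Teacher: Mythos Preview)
The statement you were asked to prove is labeled a \emph{conjecture} in the paper, and the paper does not prove it; Theorem~\ref{result0} is presented only as ``a new step on the proof'' of Conjecture~\ref{oldconject}. So there is no proof in the paper to compare against, and your proposal, quite appropriately, is not a proof either but a plan that ends by conceding the statement ``stays a conjecture.'' On that diagnosis you are correct.

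Your divisibility reduction is sound and worth recording: for any odd prime $p\mid 2h+1$ one has $\sigma(M^{p-1})\mid\sigma(M^{2h})$, so Theorem~\ref{result0} disposes of every $(M,h)$ for which $2h+1$ has some prime factor $p\ge 5$ with $M\notin\Sigma_p$. The paper does not spell this out, so your proposal actually extracts slightly more from Theorem~\ref{result0} than the paper does explicitly. Your enumeration of the residual cases---$2h+1$ a power of $3$, or $M\in\Sigma_p$ for every prime $p\ge 5$ dividing $2h+1$---is also accurate.

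Where your write-up wobbles is in the handling of those residues. Saying ``For $k=1$ this is the excluded base case'' is misleading: when $2h+1=3$ the hypothesis excludes only the two listed $M$, so one must still show $\sigma(M^2)$ has a non-Mersenne factor for \emph{every other} Mersenne prime $M$; this is itself a nontrivial instance of the conjecture, not something one can wave away. Likewise, ``verify by a finite computation that $\phi_9(M)$ is non Mersenne'' is not a finite task, since $M$ ranges over infinitely many Mersenne primes. Finally, your closing paragraph correctly identifies the genuine obstruction: the defining equalities of $\Sigma_p^1,\dots,\Sigma_p^4$ are exactly where the coefficient comparisons of Section~\ref{theproof} collapse, and no substitute argument is currently known. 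That is precisely why the paper leaves Conjecture~\ref{oldconject} open.
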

\begin{remark}
{\emph{Conditions defining the sets $\Sigma_p^j$ are very restrictive (see Table below to illustrate this fact).
For several ``small'' values of $p$ and of $d$, the set $\Lambda_{p,d}^j:=\{M \in \Sigma_p^j: 5 \leq \deg(M) \leq d\}$ is empty or of small cardinality.\\
$\text{We denote by ${\mathcal{M}}_d$ the set of Mersenne primes $M$ such that $5 \leq \deg(M) \leq d$.}$}}
$$\begin{array}{|l|c|c|c|c|c|c|}
\hline
&&&&&&\\
p& d&\# \Lambda_{p,d}^1&\# \Lambda_{p,d}^2&\# \Lambda_{p,d}^3&\# \Lambda_{p,d}^4& \# {\mathcal{M}}_d\\
\hline
5&100&4&0&0&0&226\\
7&100&0&0&2&2&226\\
11&100&2&0&2&0&226\\
13&100&0&2&0&0&226\\
17&100&2&0&0&0&226\\
19&100&0&0&0&0&226\\
23&100&0&0&0&0&226\\
29&100&4&0&2&0&226\\
53&60&2&0&4&0&138\\
59&60&0&0&0&0&138\\
61&60&2&2&0&0&138\\
67&60&0&0&0&0&138\\
71&60&0&2&0&0&138\\
\hline
\end{array}$$
{\emph{We hope that it remains a few things to completely establish that Conjecture.}}
\end{remark}
We mainly prove Theorem \ref{result0}, by contradiction (to Corollary \ref{sommaieven} or to Lemma \ref{alfamW}-iii)). Precisely, we shall prove that there exists an odd positive integer $\ell$ such that $\alpha_{\ell}(U_{2h}) = 1$ or $\alpha_{\ell}(R) = 1$.
\section{Useful facts}
In Section \ref{theproof}, we shall use Lemmas \ref{lesalfal}, \ref{alfasigmM2h}, \ref{alphaM2h}, \ref{alphadegM} and Corollary \ref{alfaplusb} (sometimes, without explicit mentions).
\begin{lemma} \label{lesalfal}
Let $S \in \F_2[x]$ of degree $s \geq 1$ and $l,t,r,r_1,\ldots, r_k \in \N$ such that $r_1>\cdots >r_k$, $t \leq k,
r_1-r_t \leq l \leq r \leq s.$  Then\\
i) $\alpha_l[(x^{r_1} + \cdots +x^{r_k})S] = \alpha_l(S) + \alpha_{l-(r_1-r_2)}(S)+\cdots + \alpha_{l-(r_1-r_t)}(S)$.\\
ii) $\alpha_l(\sigma(S))=\alpha_l(S)$ if no irreducible polynomial of degree at most $r$~divides~$S$.
\end{lemma}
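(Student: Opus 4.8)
The plan is to treat both statements as bookkeeping of coefficients, the only point requiring care being that the subscript in $\alpha_j(\cdot)$ is always read relative to the degree of the \emph{particular} polynomial it is applied to, so that the reference degree changes from $S$ to $(x^{r_1}+\cdots+x^{r_k})S$ in part i) and to $\sigma(S)$ in part ii). Throughout I would use the convention $\alpha_j(S)=0$ whenever $j<0$ or $j>s$.

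\emph{Part i).} Put $T:=(x^{r_1}+\cdots+x^{r_k})S$. In $\F_2[x]$ the exponents $r_1>\cdots>r_k$ are distinct, so $x^{r_1}+\cdots+x^{r_k}$ is monic of degree $r_1$; as $S$ is monic of degree $s$, $T$ is monic of degree $r_1+s$, and $\alpha_l(T)$ is by definition the coefficient of $x^{r_1+s-l}$ in $T$. Writing $T=\sum_{i=1}^{k} x^{r_i}S$, the contribution of $x^{r_i}S$ to that coefficient is the coefficient of $x^{\,s-(l-(r_1-r_i))}$ in $S$, i.e. $\alpha_{l-(r_1-r_i)}(S)$. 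For $i=1$ the index is $l$, and $0\le l\le r\le s$; for $2\le i\le t$ the hypothesis $r_1-r_t\le l$ gives $0\le l-(r_1-r_i)\le l\le s$, so these $t$ terms land in the admissible range and contribute $\alpha_{l-(r_1-r_i)}(S)$ (the term $i=1$ being $\alpha_l(S)$); taking $t$ maximal with $r_1-r_t\le l$, as in the applications, every $i>t$ gives a negative index $l-(r_1-r_i)$ and hence a vanishing term. Summing the surviving contributions yields the stated identity.

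\emph{Part ii).} Factor $S=\prod_i P_i^{e_i}$ into distinct monic irreducibles. By multiplicativity of $\sigma$ one has $\sigma(S)=\prod_i\sigma(P_i^{e_i})$, and $\sigma(P_i^{e_i})=1+P_i+\cdots+P_i^{e_i}=P_i^{e_i}+L_i$ with $\deg L_i\le e_i\deg P_i-\deg P_i$. In particular $\sigma(S)$ is monic of degree $\sum_i e_i\deg P_i=s$, so $\alpha_l(\sigma(S))$ is the coefficient of $x^{s-l}$ in $\sigma(S)$, in the same normalization used for $\alpha_l(S)$. Expanding $\prod_i(P_i^{e_i}+L_i)=S+E$, every monomial of the error $E$ comes from a summand in which at least one factor $P_i^{e_i}$ has been replaced by $L_i$, hence has degree at most $s-\min_i\deg P_i$. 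Since no irreducible factor of $S$ has degree $\le r$, we have $\deg P_i\ge r+1$ for all $i$, whence $\deg E\le s-r-1<s-r\le s-l$. The coefficient of $x^{s-l}$ is therefore unchanged on passing from $S$ to $S+E=\sigma(S)$, i.e. $\alpha_l(\sigma(S))=\alpha_l(S)$.

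There is no genuine obstacle here: the entire content is fixing the degree normalizations and keeping track of which shifted copies of $S$ in i), and which cross terms of the product in ii), can reach the exponent $s-l$ (respectively $r_1+s-l$). The one thing worth stating explicitly — and the only place the hypotheses $r_1-r_t\le l$, $l\le r$, and ``$\deg P_i>r$'' are used — is precisely that they force all the unwanted terms to have too small (or negative) an exponent to interfere.
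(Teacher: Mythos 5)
Your proof is correct and follows the same route the paper takes: part i) is the direct coefficient bookkeeping from the definition of $\alpha_l$, and part ii) rests on writing $\sigma(S)=S+T$ with $\deg(T)\leq \deg(S)-r-1$, which you additionally justify via multiplicativity of $\sigma$ over the irreducible factorization. Your observation that in i) one must read $t$ as maximal with $r_1-r_t\leq l$ (so that the terms $i>t$ have negative index and vanish) is a sensible reading of a slight imprecision in the statement, and is indeed how the lemma is used.
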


\begin{proof}
i) (resp. ii)) follows from the definition of $\alpha_l$ (resp. from the fact: $\sigma(S) = S + T$, where $\deg(T) \leq \deg(S)-r-1$).
\end{proof}

\begin{lemma} \label{alfasigmM2h}
One has: $\alpha_l(\sigma(M^{2h}))=\alpha_l(M^{2h})$ if $1\leq l \leq a+b-1$,
$\alpha_l(\sigma(M^{2h})) = \alpha_l(M^{2h}+M^{2h-1})$ if $a+b \leq l \leq 2(a+b)-1$.
\end{lemma}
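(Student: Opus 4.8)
The plan is to peel off the top one or two terms from the geometric expansion $\sigma(M^{2h}) = 1 + M + \cdots + M^{2h}$ and to verify, by a degree count, that the remaining tail cannot influence the coefficient of $x^{2h\deg_M - l}$ in either of the two stated ranges. Throughout I use that $\deg(\sigma(M^{2h})) = \deg(M^{2h}) = 2h(a+b)$, so $\alpha_l(\sigma(M^{2h}))$ is the coefficient of $x^{2h(a+b)-l}$.

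For the first assertion, note that $M$ is the unique irreducible divisor of $M^{2h}$ and $\deg(M) = a+b$, so no irreducible polynomial of degree at most $a+b-1$ divides $M^{2h}$; by Lemma \ref{lesalfal}-ii) (with $r = a+b-1$) we get $\alpha_l(\sigma(M^{2h})) = \alpha_l(M^{2h})$ for every $l$ with $1 \le l \le a+b-1$. Equivalently, writing $\sigma(M^{2h}) = M^{2h} + \sigma(M^{2h-1})$ and noting $\deg(\sigma(M^{2h-1})) = (2h-1)(a+b) < 2h(a+b) - l$ exactly when $l < a+b$, the summand $\sigma(M^{2h-1})$ contributes nothing to the coefficient of $x^{2h(a+b)-l}$ in that range.

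For the second assertion I would split off two terms instead: $\sigma(M^{2h}) = M^{2h} + M^{2h-1} + \sigma(M^{2h-2})$, with the convention $\sigma(M^0) = 1$ when $h = 1$. Here $\deg(\sigma(M^{2h-2})) = (2h-2)(a+b)$, which is $< 2h(a+b) - l$ exactly when $l < 2(a+b)$, so $\sigma(M^{2h-2})$ does not affect the coefficient of $x^{2h(a+b)-l}$ for $a+b \le l \le 2(a+b)-1$. Since $\deg(M^{2h} + M^{2h-1}) = 2h(a+b) = \deg(\sigma(M^{2h}))$, the coefficient of $x^{2h(a+b)-l}$ in $\sigma(M^{2h})$ coincides with $\alpha_l(M^{2h} + M^{2h-1})$, as claimed. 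The argument is purely bookkeeping on degrees; the step most prone to slips — rather than a genuine obstacle — is making sure the inequalities on $l$ are applied with the correct strictness (so that the endpoints $l = a+b-1$ and $l = 2(a+b)-1$ are included) and that the degenerate case $h = 1$, where $\sigma(M^{2h-2})$ collapses to the constant $1$, is covered.
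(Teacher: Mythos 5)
Your proof is correct and follows essentially the same route as the paper: writing $\sigma(M^{2h}) = M^{2h} + M^{2h-1} + \sigma(M^{2h-2})$ and checking by degree count (equivalently, via Lemma \ref{lesalfal}-ii)) that the tail of degree $(2h-2)(a+b)$ cannot affect $\alpha_l$ in the stated ranges. Your extra care about the endpoints and the degenerate case $h=1$ is sound but adds nothing beyond the paper's argument.
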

\begin{proof}
Since $\sigma(M^{2h}) = M^{2h} + M^{2h-1} + T$, with $\deg(T) \leq (a+b)(2h-2)=2h(a+b)-2(a+b)$, Lemma \ref{lesalfal}-ii) implies that
$\alpha_l(\sigma(M^{2h})) = \alpha_l(M^{2h})$ if $1\leq l \leq a+b-1$ and
$\alpha_l(\sigma(M^{2h})) = \alpha_l(M^{2h}+M^{2h-1})$
if $a+b\leq l \leq 2(a+b)-1$.
\end{proof}

\begin{lemma}{\rm{\cite[Lemmas 4.6 and 4.8]{Gall-Rahav-mersenn}}} \label{omegasigmM2h}~\\
i) $\sigma(M^{2h})$ is square-free and it is not a Mersenne prime.\\
ii) $a \geq 2$ or $b \geq 2$ so that $M \not=1+x+x^2$.
\end{lemma}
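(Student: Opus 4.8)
The plan is to prove i) by two short self-contained arguments and to treat ii) as a reduction to a direct check. For the square-freeness, set $N:=\sigma(M^{2h})$ and exploit the identity $(M+1)N=M^{2h+1}+1$. If some irreducible $P$ had $P^{2}\mid N$, then $P^{2}\mid M^{2h+1}+1$, hence $P$ divides the derivative $(M^{2h+1}+1)'=(2h+1)M^{2h}M'=M^{2h}M'$ (characteristic $2$, with $2h+1$ odd). Since $N\equiv 1\pmod M$ we get $P\neq M$, so $P\mid M'$. A short parity analysis — using $\gcd(a,b)=1$ and that $a$ or $b$ is odd — shows that $M'$ equals $x^{a-1}(x+1)^{b}$, $x^{a}(x+1)^{b-1}$ or $x^{a-1}(x+1)^{b-1}$ according to the parities of $a$ and $b$, and that $M'\neq 0$ (an irreducible over $\F_2$ is separable). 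In each case $M'$ is, up to a unit, a product of powers of $x$ and $x+1$, so $P\in\{x,x+1\}$. But $M(0)=M(1)=1$, hence $N(0)=N(1)=(2h+1)\bmod 2=1$, so neither $x$ nor $x+1$ divides $N$ — a contradiction. Thus $N$ is square-free.

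For the assertion that $N$ is not a Mersenne prime: if $N=1+x^{c}(x+1)^{d}$ with $c,d\geq 1$, then $x^{c}(x+1)^{d}=N+1=M+M^{2}+\cdots+M^{2h}=M\cdot\sigma(M^{2h-1})$, so the irreducible polynomial $M$ would divide $x^{c}(x+1)^{d}$; that is impossible since $\deg M=a+b\geq 2$ forces $M\notin\{x,x+1\}$. Hence $N$ is not of Mersenne form. (This argument uses neither irreducibility of $N$ nor assertion ii).)

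Finally, the condition ``$a\geq 2$ or $b\geq 2$'' in ii) only excludes $(a,b)=(1,1)$, i.e.\ the unique degree-$2$ irreducible $M=1+x(x+1)=x^{2}+x+1$. I would justify it by disposing of this case directly — for instance $\sigma(M^{2})=x^{4}+x+1$ is irreducible and, by the computation above, not of the form $1+x^{c}(x+1)^{d}$, and one argues in the same spirit for every $h\geq 1$ — so that ``$a\geq 2$ or $b\geq 2$'' may be added to the standing hypotheses; this is the content of \cite[Lemma 4.8]{Gall-Rahav-mersenn}. I expect the main obstacle to lie precisely here: producing a clean argument, uniform in $h$, that $\sigma\bigl((x^{2}+x+1)^{2h}\bigr)$ always carries a non-Mersenne prime factor, in contrast to the one-line reductions available for i); within i), the only delicate point is the characteristic-$2$ differentiation bookkeeping across the parity cases for $a$ and $b$.
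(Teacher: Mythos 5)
The paper offers no proof of this lemma at all: it is imported verbatim from \cite[Lemmas 4.6 and 4.8]{Gall-Rahav-mersenn}. Your argument for part i) is correct and self-contained, and it is a genuinely independent route: the identity $(M+1)\sigma(M^{2h})=M^{2h+1}+1$ plus the derivative criterion for repeated factors, the observation $\sigma(M^{2h})\equiv 1 \pmod M$, the parity computation of $M'$ (which is right in all three cases, including the degenerate case $a=b=1$ where $M'=1$ and the contradiction is immediate), and the evaluations $\sigma(M^{2h})(0)=\sigma(M^{2h})(1)=1$ all check out; likewise $\sigma(M^{2h})+1=M\,\sigma(M^{2h-1})$ cleanly rules out the Mersenne form. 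This gives the reader an actual proof where the paper only gives a citation.

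Part ii), however, is not proved. You correctly identify that the statement is only meaningful as a reduction inside the standing proof-by-contradiction (as a bare statement about Mersenne primes it is false, since $M=1+x(x+1)$ is one), so what has to be shown is: if $M=1+x+x^2$, then $\sigma(M^{2h})$ has a non-Mersenne prime divisor for every relevant $h$, so that this $M$ can be excluded from assumption (\ref{assume}). Your verification handles only $h=1$ (where $\sigma(M^2)=x^4+x+1$ is irreducible and, by your part i) argument, not Mersenne), and the phrase ``one argues in the same spirit for every $h\geq 1$'' is precisely the missing content: for $h\geq 2$ the polynomial $\sigma((x^2+x+1)^{2h})$ is reducible, your irreducibility shortcut disappears, and showing that not all of its prime factors are Mersenne requires a separate argument (this is exactly what \cite[Lemma 4.8]{Gall-Rahav-mersenn} supplies). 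As written, part ii) of your proposal is a deferral to the cited reference rather than a proof, and you say as much yourself; the gap is real and should either be closed with a uniform-in-$h$ argument or the claim should be explicitly quoted from \cite{Gall-Rahav-mersenn} rather than presented as provable ``in the same spirit.''
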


We may assume, by Lemma \ref{omegasigmM2h}, that:
\begin{equation}\label{assume}
\text{$\sigma(M^{2h})= \displaystyle{\prod_{j\in J} {M_j}}$, $M_j = 1+x^{a_j}(x+1)^{b_j}$ irreducible, $M_i \not= M_j$ if $i \not= j$.}
\end{equation}
\begin{corollary} \label{sommaieven}
i) The integers $\displaystyle{u=\sum_{j \in J} a_j}$ and $\displaystyle{v=\sum_{j \in J} b_j}$ are both even.\\
ii) $U_{2h}$ splits $($over $\F_2)$.\\
iii) $U_{2h}$ is a square so that $\alpha_k(U_{2h}) = 0$ for any odd positive integer $k$.
\end{corollary}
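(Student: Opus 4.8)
The plan is to establish the three claims in order, exploiting that Mersenne primes $M_j = 1+x^{a_j}(x+1)^b_j$ all satisfy $\gcd(a_j,b_j)=1$ with $a_j$ or $b_j$ odd, and that each $M_j$ has constant term $1$ and is congruent to $x^{a_j}(x+1)^{b_j}$ modulo... more precisely $M_j \equiv 1 \pmod{x}$ and $M_j \equiv 1 \pmod{x+1}$. For part i), I would look at $\sigma(M^{2h})$ modulo $x$ and modulo $x+1$. Since $M = 1+x^a(x+1)^b$, we have $M \equiv 1 \pmod x$, hence $M^k \equiv 1 \pmod x$ for all $k$, and $\sigma(M^{2h}) = 1+M+\cdots+M^{2h} \equiv 2h+1 \equiv 1 \pmod x$ (as $2h+1$ is odd in $\F_2$). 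Writing $\sigma(M^{2h}) = \prod_{j\in J} M_j = \prod_{j\in J}\bigl(1+x^{a_j}(x+1)^{b_j}\bigr)$ and reducing modulo $x^2$, one gets $\sigma(M^{2h}) \equiv \prod_{j}(1 + \delta_j x) \pmod{x^2}$ where $\delta_j = 1$ iff $a_j = 1$; so the coefficient $\alpha_{\deg-1}$-type term forces $\#\{j : a_j=1\}$ to be even. But I actually want the parity of $u = \sum a_j$, so instead I should reduce $\overline{\,\cdot\,}$ or use a cleverer substitution: evaluate the derivative, or substitute in a way that detects $\sum a_j \bmod 2$. The cleanest route: note $M(x^2) = 1 + x^{2a}(x+1)^{2b}\cdot(\text{something})$... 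Actually the right tool is: over $\F_2$, $A(x)$ is a square iff $A'(x)=0$, and $\sum a_j \bmod 2$ is detected by looking at $\sigma(M^{2h})$ evaluated via the Frobenius-type identity $\prod M_j \bmod (x^2+x)$ or by comparing degrees of $\sigma(M^{2h})$ and $\overline{\sigma(M^{2h})}$.

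Here is the approach I would actually commit to. Use the known fact (it is standard for these Mersenne primes, cf. the cited papers) that $\deg_x$ and $\deg_{x+1}$ behave additively: $\sigma(M^{2h})$ has "$x$-valuation pattern" controlled by $u = \sum a_j$ and "$(x+1)$-valuation pattern" by $v = \sum b_j$, since $\prod M_j = \prod(1 + x^{a_j}(x+1)^{b_j})$ and the lowest-degree term after the constant $1$ is $x^{\min a_j}(x+1)^{\min b_j}$-ish — but better: consider $\sigma(M^{2h}) + 1 = \prod_j M_j + 1$. Expanding the product, $\prod_j M_j = 1 + \sum_j x^{a_j}(x+1)^{b_j} + (\text{higher order})$. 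I would instead reduce modulo $x$: already done, gives $1$. Reduce the polynomial $\sigma(M^{2h})(x)$ and note that the total degree $\deg(\sigma(M^{2h})) = \sum_j(a_j+b_j)$ while separately one shows the exact power structure. The decisive identity: $\sigma(M^{2h})\bmod x$ and the top coefficient $\alpha$ both being $1$, combined with $M^{2h} = (1 + x^a(x+1)^b)^{2h}$ having only even exponents if we... no. Let me just say: I will prove $u,v$ even by the substitution argument showing $\sigma(M^{2h})$ is a square (part iii first!), then read off $u = \sum a_j$ even and $v = \sum b_j$ even from unique factorization and the fact that a square in $\F_2[x]$ has all exponents even, applied to $\prod_j M_j = \prod_j(1+x^{a_j}(x+1)^b_j)$ — actually cleanest: if $\sigma(M^{2h})=Q^2$ then writing $Q = \prod(\text{irreducibles})$, each $M_j$ must appear with even multiplicity, contradicting square-freeness from Lemma \ref{omegasigmM2h}-i) unless $J=\emptyset$. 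So that is wrong too; the $M_j$ are distinct, so $\sigma(M^{2h})$ is NOT a perfect square as a polynomial. Hence iii) must be about $U_{2h} = \sigma(\sigma(M^{2h}))$, not $\sigma(M^{2h})$.

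So the correct logical order is: first prove i), then ii), then iii). For i), I reduce $\sigma(M^{2h}) = \prod_{j\in J} M_j$ modulo $x$ to get $1$; then since each $M_j \equiv 1 + x^{a_j} \pmod{x^2}$ when I also know — hmm, $(x+1)^{b_j}\equiv 1 \pmod x$ — wait $(x+1)^{b_j} \equiv 1 \pmod x$, so $M_j \equiv 1 + x^{a_j}\cdot 1 = 1+x^{a_j}\pmod{x^2}$ only if $a_j \geq ...$; in $\F_2[x]/(x^2)$, $x^{a_j}=0$ for $a_j\geq 2$ and $=x$ for $a_j=1$. That only gives $\#\{a_j=1\}$ even. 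To get $\sum a_j$ even I pass to $\F_2[x]/(x^{N})$ for large $N$ and use the expansion $\prod_j(1+x^{a_j}(x+1)^{b_j})$: the coefficient analysis is genuinely the content. \textbf{This is the main obstacle} — I expect the cited Lemmas 4.6/4.8 of \cite{Gall-Rahav-mersenn} (our Lemma \ref{omegasigmM2h}) plus a degree/parity bookkeeping on $\sigma(M^{2h}) = M^{2h}+M^{2h-1}+T$ to force $u,v$ even, perhaps via: $u+v = \sum \deg M_j = \deg(\sigma(M^{2h})) = 2h(a+b)$ is even, and separately $u \equiv (\text{number of odd-}a_j)\cdot 1$, which one pins down by evaluating at a suitable point or by the $x\mapsto x+1$ symmetry swapping $a_j\leftrightarrow b_j$ and $u\leftrightarrow v$, reducing to a single parity that is then computed from $\sigma(M^{2h})\equiv 1 \pmod x$ refined mod $x^2$ and mod $(x+1)^2$. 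Once i) holds, ii) is immediate: $U_{2h}=\sigma(\prod_j M_j)=\prod_j \sigma(M_j) = \prod_j(1+M_j) = \prod_j x^{a_j}(x+1)^{b_j} = x^{u}(x+1)^{v}$, which splits over $\F_2$; and then iii) follows since $u,v$ both even makes $x^u(x+1)^v = (x^{u/2}(x+1)^{v/2})^2$ a perfect square, so every coefficient $\alpha_k(U_{2h})$ with $k$ odd vanishes. The one subtlety in ii) is that $\sigma(M_j) = 1 + M_j$ uses that $M_j$ is irreducible (true by \eqref{assume}), and the product formula uses multiplicativity of $\sigma$ on the square-free $\sigma(M^{2h})$ from Lemma \ref{omegasigmM2h}-i).
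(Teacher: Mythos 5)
Your parts ii) and iii) are exactly the paper's argument: using (\ref{assume}), the square-freeness from Lemma \ref{omegasigmM2h}-i), and multiplicativity of $\sigma$, one gets $U_{2h}=\sigma\bigl(\prod_{j\in J}M_j\bigr)=\prod_{j\in J}(1+M_j)=\prod_{j\in J}x^{a_j}(x+1)^{b_j}=x^u(x+1)^v$, which splits over $\F_2$, and once $u$ and $v$ are both even this is a square of even degree, so the coefficient of $x^{\deg(U_{2h})-k}$ vanishes for every odd $k$. That part of your proposal is correct and coincides with the paper.

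The genuine gap is part i). The paper does not reprove it: it simply quotes \cite[Corollary 4.9]{Gall-Rahav-mersenn}, so a citation would have been enough; but your proposal instead tries to derive the parity of $u=\sum_j a_j$ and $v=\sum_j b_j$, and none of the routes you sketch closes. Reducing $\prod_j\bigl(1+x^{a_j}(x+1)^{b_j}\bigr)$ modulo $x^2$ only controls the parity of $\#\{j: a_j=1\}$, not of $\sum_j a_j$; the degree identity $u+v=\deg\sigma(M^{2h})=2h(a+b)$ only yields $u\equiv v \pmod 2$; and the substitution $x\mapsto x+1$ merely exchanges the roles of $u$ and $v$ (it replaces $M$ by $\overline{M}$), so it cannot by itself fix the common parity. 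You yourself flag this as ``the main obstacle'' and leave it as an expectation that the cited lemmas plus bookkeeping will force it--that is an acknowledgement of the gap, not a proof. Since the evenness of $u$ and $v$ is precisely what makes $x^u(x+1)^v$ a square in iii), your write-up as it stands does not establish the corollary; either invoke \cite[Corollary 4.9]{Gall-Rahav-mersenn} as the paper does, or supply the actual parity argument from that reference.
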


\begin{proof}
i): See \cite[Corollary 4.9]{Gall-Rahav-mersenn}.\\
ii) and iii): Assumption (\ref{assume}) implies: $$\displaystyle{U_{2h}= \sigma(\sigma(M^{2h}))=\sigma(\prod_{j\in J} {M_j}) =  \prod_{j \in J} x^{a_j}(x+1)^{b_j} = x^u(x+1)^v},$$ with $u$ and $v$ both even.
\end{proof}
\begin{lemma} \label{alfamW}
i) $\deg_M \geq 5$ and $\omega(\sigma(M^{2h})) \geq 3$.\\
ii) $W \not= 0$ and it is not a square.\\
iii) $R=\sigma(M^{2h-1}) + W$ is a square and $M^{2h-1} + W$ is not a square.\\
iv) If $\deg(W)$ is even, then there exists a (least) odd integer $m \geq 1$ such that $\alpha_m(W) = 1$.\\
v) If $\deg(W)$ is odd, then there exists a (least) even integer $e \geq 1$ such that $\alpha_e(W) = 1$.
\end{lemma}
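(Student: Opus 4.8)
The plan is to route everything through the basic fact that a polynomial over $\F_2$ is a perfect square exactly when its formal derivative vanishes, applied to $W=U_{2h}+\sigma(M^{2h})+1$ and to $R=M^{2h}+U_{2h}+1$, using that by Corollary \ref{sommaieven} we have $U_{2h}=x^u(x+1)^v$ with $u,v$ both even, so that $U_{2h}$ and $1$ have zero derivative. For part i) I would defer to the earlier work: the finitely many Mersenne primes of degree $\le 4$ --- namely $1+x+x^3$, $1+x^2+x^3$, $1+x^3+x^4$, $1+x+x^2+x^3+x^4$, since $1+x+x^2$ is already ruled out by Lemma \ref{omegasigmM2h}-ii) --- and the case $\omega(\sigma(M^{2h}))\le 2$ are disposed of in \cite{Gall-Rahav-mersenn,Gall-Rahav14}; as the whole argument runs under Assumption (\ref{assume}), these configurations cannot occur, whence $\deg_M\ge 5$ and $\omega(\sigma(M^{2h}))\ge 3$.

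For part ii): differentiating $\sigma(M^{2h})=\sum_{i=0}^{2h}M^i$ term by term over $\F_2$, only the odd exponents survive and one obtains $\sigma(M^{2h})'=M'\sum_{j=0}^{h-1}M^{2j}=M'\,\sigma(M^{h-1})^2$, where $M'=x^{a-1}(x+1)^{b-1}\bigl((a+b)x+a\bigr)\ne 0$ (it would vanish only if $a$ and $b$ were both even, impossible since $\gcd(a,b)=1$). Since $U_{2h}'=1'=0$, this gives $W'=\sigma(M^{2h})'\ne 0$, so $W$ is not a square; moreover $W\ne 0$, for otherwise $\sigma(M^{2h})=x^u(x+1)^v+1$ would be a perfect square of positive degree ($u,v$ even), contradicting the square-freeness of $\sigma(M^{2h})$ (Lemma \ref{omegasigmM2h}-i)). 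For part iii): using $\sigma(M^{2h-1})+\sigma(M^{2h})=M^{2h}$ over $\F_2$, we get $R=\sigma(M^{2h-1})+W=U_{2h}+1+M^{2h}=\bigl(M^h+x^{u/2}(x+1)^{v/2}+1\bigr)^2$, a square; and $(M^{2h-1}+W)'=(M^{2h-1})'+W'=M^{2h-2}M'+M'\,\sigma(M^{h-1})^2=M'\,\sigma(M^{h-2})^2\ne 0$, so $M^{2h-1}+W$ is not a square.

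Parts iv) and v) are then coefficient bookkeeping against ii). Write $d=\deg W$; one has $x(x+1)\mid W$ (since $U_{2h}$ vanishes at $0$ and at $1$ while $\sigma(M^{2h})(0)=\sigma(M^{2h})(1)=1$ in $\F_2$, so $W(0)=W(1)=0$), hence $d\ge 2$. If $d$ is even and $\alpha_m(W)=0$ for every odd $m\ge 1$, then $W$ has no monomial of odd degree at all, hence $W$ is a square --- contradicting ii); this settles iv). If $d$ is odd and $\alpha_e(W)=0$ for every even $e\ge 2$, then every odd-degree monomial of $W$ of degree $<d$ vanishes, so $W=x^d+(\text{a square})$ and $W'=x^{d-1}$; comparing with $W'=M'\,\sigma(M^{h-1})^2$ forces $\sigma(M^{h-1})^2=x^k$ for an even $k$, i.e. $\sigma(M^{h-1})=x^{k/2}$, whence $M^h+1=(M+1)\,\sigma(M^{h-1})=x^{a+k/2}(x+1)^b$, that is $M^h=1+x^{a+k/2}(x+1)^b$. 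But $M^h$ cannot be of Mersenne form when $h\ge 2$: the Mason--Stothers inequality applied to $1+x^s(x+1)^t=M^h$ (three pairwise coprime polynomials whose product has radical $x(x+1)M$) yields $h\deg_M\le \deg_M+1$, impossible for $h\ge 2$; alternatively one argues elementarily by descent on the derivative --- if $h$ is even then $1+x^s(x+1)^t$ being a square forces $s,t$ even and reduces to $M^{h/2}$, while if $h$ is odd then $M^{h-1}$ would have to divide $(s+t)x+s$, of degree $\le 1$. This contradiction proves v).

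The steps I expect to be the real obstacles are part i), which is not established here but imported from the prior papers (and whose exact formulation there I am least sure of), and --- inside v) --- the impossibility of $M^h$ being of Mersenne form for $h\ge 2$; everything else reduces to routine differentiation over $\F_2$.
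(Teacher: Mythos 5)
Your parts i)--iv) are correct, and for ii) and iii) you take a genuinely different route from the paper: you differentiate, using $W'=\sigma(M^{2h})'=M'\,\sigma(M^{h-1})^2\neq 0$ and $(M^{2h-1}+W)'=M'\,\sigma(M^{h-2})^2\neq 0$ (valid since $h\geq 2$), whereas the paper argues by square bookkeeping: if $W$ were a square then $\sigma(M^{2h})=U_{2h}+1+W$ would be a square, contradicting Lemma \ref{omegasigmM2h}-i) and part i); and $M^{2h-1}+W=R+\sigma(M^{2h-2})$ cannot be a square because $\sigma(M^{2h-2})$ is square-free of positive degree. Both routes work, and yours has the merit of actually supplying the justification the paper leaves implicit in v), where it stops at the assertion that $x^{\deg(W)-1}=(\sigma(M^{2h-1}))'$ is a contradiction.

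Inside v), however, your primary argument is defective and your backup has a loose end. Mason--Stothers in characteristic $2$ requires that not all of $A',B',C'$ vanish; with $C=M^h$ and $B=C+1$, all three derivatives are zero whenever $h$ is even, and the inequality you quote then simply fails (e.g. $1+x^2(x+1)^2=(1+x+x^2)^2$ has maximal degree $4$ but radical of degree $4$). Your descent does cover even $h$, but if $h$ is a power of two it terminates at exponent $1$, i.e. $M=1+x^{s'}(x+1)^{b/2^{j}}$ with $j\geq 1$, where neither of your two branches produces a contradiction; you must add that $M+1=x^a(x+1)^b$ factors uniquely, so $b/2^{j}=b$ is impossible. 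In fact the whole detour through ``$M^h$ of Mersenne form'' is unnecessary: once $M'\,\sigma(M^{h-1})^2=x^{\deg(W)-1}$ forces $\sigma(M^{h-1})=x^{k/2}$, evaluate at $x=0$ and $x=1$: since $M(0)=M(1)=1$, both values equal $h\bmod 2$, while a power of $x$ of positive degree takes the values $0$ and $1$ there, and $k=0$ would force $h=1<2$. That one line closes v) and is presumably the contradiction the paper had in mind.
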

\begin{proof}
i): The case: $\deg_M \leq 4$ or $\omega(\sigma(M^{2h})) \leq 2$, is already treated in \cite{Gall-Rahav14}.\\
ii): If $0=W = U_{2h}+\sigma(M^{2h})+1$, then $\sigma(M^{2h}) = U_{2h}+1$ is a square by Corollary \ref{sommaieven}-iii). It contradicts i).\\
If $W$ is a square, then $\sigma(M^{2h}) = U_{2h} + 1 + W$ is also a square, which is impossible.\\
iii): $R=\sigma(M^{2h-1}) + W = U_{2h}+1+M^{2h}$ which is a square.\\
If $M^{2h-1} + W = R + \sigma(M^{2h-2})$ is a square, then $\sigma(M^{2h-2})$ is a square, with $2h-2 \geq 2$. It is impossible.\\
iv): follows from the fact that $W$ is not a square.\\
v): If for any even $e$, $\alpha_e(W) = 0$, then $W + x^{\deg(W)}$ is a square. \\
So, by differentiating:
$0 =(W + x^{\deg(W)})' = W' +x^{\deg(W)-1}$. \\
We get the contradiction:\\
$x^{\deg(W)-1}=W' = (R+\sigma(M^{2h-1}))' = 0+(\sigma(M^{2h-1}))' = (\sigma(M^{2h-1}))'.$
\end{proof}
\begin{corollary} \label{aboutceab}
If $c < a+b$ (resp. $c> a+b$), then $c$ (resp. $a+b$) is even.
\end{corollary}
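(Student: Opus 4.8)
The plan is to read off two coefficient identities from the definition $W = U_{2h} + \sigma(M^{2h}) + 1$ and then exploit that $U_{2h}$ and $M^{2h}$ are squares. First I would record that $\sigma(M^{2h})$ is monic of degree $n := 2h\deg_M$, while $U_{2h} = x^u(x+1)^v$ is monic of degree $u + v = \sum_{j\in J}(a_j+b_j) = \deg(\sigma(M^{2h})) = n$ as well. Hence the leading terms cancel in $U_{2h} + \sigma(M^{2h})$, so $\deg(W) < n$ and $c = n - \deg(W) \geq 1$; moreover $W$ cannot be a nonzero constant, since $1 = 1^2$ is a square, contradicting Lemma \ref{alfamW}-ii); thus $1 \leq c \leq n-1$. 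Comparing coefficients of $x^j$ for every $j \geq 1$ on the two sides of $W = U_{2h} + \sigma(M^{2h}) + 1$ then yields $\alpha_l(U_{2h}) = \alpha_l(\sigma(M^{2h}))$ for $0 \leq l \leq c-1$, together with $\alpha_c(U_{2h}) + \alpha_c(\sigma(M^{2h})) = 1$ (the leading coefficient of $W$).

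Next I would split into the two cases. If $c < a+b$, then $1 \leq c \leq a+b-1$, so Lemma \ref{alfasigmM2h} turns the second identity into $\alpha_c(U_{2h}) + \alpha_c(M^{2h}) = 1$. Now $U_{2h}$ is a square with $\alpha_k(U_{2h}) = 0$ for odd $k$ by Corollary \ref{sommaieven}-iii), and $M^{2h} = (M^h)^2$ is a square, so $\alpha_k(M^{2h}) = 0$ for odd $k$ as well. Hence if $c$ were odd the left-hand side would vanish, contradicting the value $1$; so $c$ is even.

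If instead $c > a+b$, then $a+b \leq c-1$, so the first identity at $l = a+b$ reads $\alpha_{a+b}(U_{2h}) = \alpha_{a+b}(\sigma(M^{2h}))$; and since $a+b$ lies in the range $[\,a+b,\ 2(a+b)-1\,]$, Lemma \ref{alfasigmM2h} rewrites the right-hand side as $\alpha_{a+b}(M^{2h} + M^{2h-1})$. Because $M$ is monic, $M^{2h}$ and $M^{2h-1}$ are monic of degrees $n$ and $n - (a+b)$, so $\alpha_{a+b}(M^{2h} + M^{2h-1})$ is the coefficient of $x^{n-(a+b)}$ in that sum, which equals $\alpha_{a+b}(M^{2h}) + 1$ (the $+1$ coming from the leading term of $M^{2h-1}$). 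If $a+b$ were odd, then $\alpha_{a+b}(M^{2h}) = 0$ ($M^{2h}$ a square) and $\alpha_{a+b}(U_{2h}) = 0$ ($U_{2h}$ a square), forcing $0 = 0 + 1$; so $a+b = \deg_M$ is even, which is the claim.

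I do not anticipate a genuine obstacle: the mechanism is just the transfer from $W$ to the squares $U_{2h}$ and $M^{2h}$ via Lemma \ref{alfasigmM2h}, combined with the vanishing of odd-index coefficients of squares. The only points requiring care are checking the hypotheses of Lemma \ref{alfasigmM2h} in each case (that $1 \leq c \leq a+b-1$ in the first, that $a+b \leq 2(a+b)-1$ in the second) and the short degree bookkeeping that identifies $\alpha_{a+b}(M^{2h}+M^{2h-1})$ with $\alpha_{a+b}(M^{2h}) + 1$.
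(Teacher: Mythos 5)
Your argument is correct, but it is not the route the paper takes. The paper's proof is a two-line degree-parity argument: by Lemma \ref{alfamW}-iii) the polynomial $R=\sigma(M^{2h-1})+W$ is a square, hence $\deg(R)$ is even; if $c<a+b$ then $\deg(W)=2h(a+b)-c>\deg(\sigma(M^{2h-1}))$, so $\deg(R)=\deg(W)=2h(a+b)-c$ and $c$ must be even, while if $c>a+b$ then $\deg(R)=\deg(\sigma(M^{2h-1}))=(2h-1)(a+b)$ and $a+b$ must be even. You instead work coefficient by coefficient: you re-derive what is essentially Corollary \ref{integerc} (that $c$ is the least index with $\alpha_c(U_{2h})+\alpha_c(\sigma(M^{2h}))=1$, and that the lower-index coefficients agree), transfer it through Lemma \ref{alfasigmM2h} to $M^{2h}$ (resp.\ to $M^{2h}+M^{2h-1}$, where your bookkeeping reproduces Corollary \ref{alfaplusb}-i)), and conclude from the vanishing of odd-index coefficients of the squares $U_{2h}$ and $M^{2h}$. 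Both proofs ultimately rest on Corollary \ref{sommaieven} (that $U_{2h}=x^u(x+1)^v$ with $u,v$ even), and your preliminary checks ($c\geq 1$, $W$ neither zero nor a square, the harmless constant term) are all in order, with no circularity since you prove rather than cite the later statements \ref{integerc} and \ref{alphaM2h}-i). The paper's version buys brevity by exploiting the square $R$ wholesale through its degree; yours is longer but bypasses $R$ and Lemma \ref{alfamW}-iii) entirely, needing only \ref{alfamW}-ii), and it makes explicit the coefficient identities that the paper will in any case set up and reuse in Section \ref{theproof}.
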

\begin{proof}
- If $c < a+b$, then $\deg(W) > \deg(\sigma(M^{2h-1}))$ and $2h(a+b)-c=\deg(W) = \deg(R)$ is even.\\
- If $c > a+b$, then $\deg(W) < \deg(\sigma(M^{2h-1}))$ and $(2h-1) (a+b) = \deg(\sigma(M^{2h-1}))= \deg(R)$ is even.
\end{proof}
\begin{corollary} \label{candm}
If $c = a+b+1$ (resp. $c+m = a+b$, $c+m = a+b+1$ with $m\geq 3$), then $c$ is odd (resp. even, even).
\end{corollary}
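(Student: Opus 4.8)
The plan is to reduce the whole statement to a single parity observation. By definition $c = c_M = 2h\deg_M - \deg(W) = 2h(a+b) - \deg(W)$, and the term $2h(a+b)$ is even; hence $c$ and $\deg(W)$ always have the same parity. So in each of the three regimes it suffices to pin down the parity of $\deg(W)$ (equivalently, of $c$).

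For the first case, $c = a+b+1$, I would simply invoke Corollary~\ref{aboutceab}. Here $c = a+b+1 > a+b$, so that corollary forces $a+b$ to be even, whence $c = a+b+1$ is odd, as claimed. (Consistently, $\deg(W) = 2h(a+b)-c = (2h-1)(a+b)-1$ is then odd, matching the parity of $c$.)

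For the remaining two cases, $c+m = a+b$ and $c+m = a+b+1$ with $m\geq 3$, the point is that the integer $m = m_M$ appearing in the hypothesis is, by convention, the quantity from Lemma~\ref{alfamW}-iv): the least odd positive integer with $\alpha_m(W) = 1$. Such an $m$ exists only when $\deg(W)$ is even; if $\deg(W)$ were odd, Lemma~\ref{alfamW}-v) would instead yield an \emph{even} witness $e$ and there need be no odd one. Thus the mere fact that $m_M$ occurs in the hypothesis forces $\deg(W)$ to be even, and then the parity observation $c\equiv\deg(W)\pmod 2$ gives that $c$ is even in both cases. The side condition $m\geq 3$ is not needed for this conclusion (it only serves to distinguish $\Sigma_p^4$ from $\Sigma_p^1$, since $m=1$ there would give $c=\deg_M$); it can simply be carried along.

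I do not expect a genuine obstacle here. The only thing to be careful about is making explicit the convention that $m_M$ denotes the integer of Lemma~\ref{alfamW}-iv), so that writing $c+m=\cdots$ already presupposes $\deg(W)$ even; once that is spelled out, the corollary is immediate from Corollary~\ref{aboutceab} together with the identity $c = 2h(a+b)-\deg(W)$.
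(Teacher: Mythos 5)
Your first case is exactly the paper's argument: $c=a+b+1>a+b$, so Corollary~\ref{aboutceab} forces $a+b$ even and $c$ odd. The parity identity $c\equiv\deg(W)\pmod 2$ (from $c=2h(a+b)-\deg(W)$) is also fine.

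The gap is in your treatment of the two cases involving $m$. You argue that the mere appearance of $m_M$ in the hypothesis forces $\deg(W)$ to be even, on the grounds that ``such an $m$ exists only when $\deg(W)$ is even.'' That is not true: Lemma~\ref{alfamW}-iv) asserts the \emph{existence} of a least odd witness when $\deg(W)$ is even, not that no odd witness exists when $\deg(W)$ is odd (take $W=x^3+x^2$: the degree is odd and $\alpha_1(W)=1$). Nor does the paper stipulate a convention that $m_M$ is undefined for odd $\deg(W)$. Moreover, reading evenness of $\deg(W)$ into the hypothesis would make the corollary do no work: its role in Section~\ref{theproof} is precisely to \emph{rule out} the subcases of Case IV such as ``$c+m=a+b$ with $c$ odd,'' which your convention would dismiss as meaningless rather than prove impossible. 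The paper's actual deduction is elementary but different: since $m$ is odd, $m\geq 1$, so $c+m=a+b$ gives $c<a+b$; and $c+m=a+b+1$ with $m\geq 3$ gives $c\leq a+b-2<a+b$; in either case Corollary~\ref{aboutceab} (whose proof rests on $R$ being a square, so that $\deg(W)=\deg(R)$ is even) yields $c$ even. Your side remark that the hypothesis $m\geq 3$ is ``not needed'' is the symptom of the problem: in the genuine argument it is exactly what guarantees $c<a+b$ in the third case, since $m=1$ would give $c=a+b$, where Corollary~\ref{aboutceab} says nothing about parity.
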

\begin{proof}
- If $c = a+b+1$, then $c>a+b$. So, $a+b$ is even.\\
- If $c+m = a+b$, then $c<a+b$. So, $c$ is even.\\
- If $c+m = a+b+1$ with $m \geq 3$, then $c<a+b$. So, $c$ is even.
\end{proof}
\begin{lemma} \label{alphaM2h}
i) For any odd integer $l \geq 1$, $\alpha_l(M^{2h}) = 0$.\\
ii) If $b$ is odd, then $\alpha_1(M^{2h-1}) = 1$.
\end{lemma}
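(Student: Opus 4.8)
The plan is to deduce both parts from the behaviour of squaring in $\F_2[x]$ together with the additivity of $\alpha_1$ provided by Lemma \ref{lesalfal}-i).

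For i), write $M^{2h} = (M^h)^2$. In characteristic $2$ one has $\bigl(\sum_i c_i x^i\bigr)^2 = \sum_i c_i x^{2i}$, so every monomial occurring in $M^{2h}$ has even exponent. Since $\deg(M^{2h}) = 2h(a+b)$ is even, the exponent $2h(a+b)-l$ whose coefficient is $\alpha_l(M^{2h})$ is odd whenever $l$ is odd; hence $\alpha_l(M^{2h}) = 0$ for every odd $l \geq 1$.

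For ii), I would first record that $\alpha_1(MS) = \alpha_1(M) + \alpha_1(S)$ for every monic $S$ of degree at least $1$. Writing $M$ as the $\F_2$-sum of the powers $x^i$ it actually contains, so that its top exponent is $a+b$, this is precisely the case $l = 1$ of Lemma \ref{lesalfal}-i): only the exponents $a+b$ and, when $\alpha_1(M) = 1$, also $a+b-1$ fall in the admissible range, contributing $\alpha_1(S)$ and $\alpha_0(S) = 1 = \alpha_1(M)$ respectively. A trivial induction then gives $\alpha_1(M^n) = n\,\alpha_1(M)$ in $\F_2$ for all $n \geq 1$, so in particular $\alpha_1(M^{2h-1}) = (2h-1)\,\alpha_1(M) = \alpha_1(M)$ because $2h-1$ is odd. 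Finally $\alpha_1(M)$ is the coefficient of $x^{a+b-1}$ in $M = 1 + x^a(x+1)^b$, which equals the coefficient of $x^{b-1}$ in $(x+1)^b$, namely $\binom{b}{b-1} = b$; this is $1$ in $\F_2$ exactly because $b$ is odd. Hence $\alpha_1(M^{2h-1}) = 1$.

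I do not expect any serious obstacle here: the argument is a short sequence of elementary coefficient computations. The only points to watch are that the index $1$ lies in the admissible range for $\alpha_1$ of the polynomials involved — which holds since $\deg M = a+b \geq 5$ by Lemma \ref{alfamW}-i) — and that the base case $n=1$ of the induction already yields the formula for $M^{2h-1}$ when $h=1$.
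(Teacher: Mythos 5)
Your proposal is correct. For part i) you argue exactly as the paper does: $M^{2h}$ is a square in $\F_2[x]$, hence only even exponents occur, and since $\deg(M^{2h})=2h(a+b)$ is even, $\alpha_l(M^{2h})=0$ for every odd $l$. For part ii) you take a genuinely different route. The paper expands $M^{2h-1}=(1+x^a(x+1)^b)^{2h-1}$ and notes that all terms other than the top one, $\bigl(x^a(x+1)^b\bigr)^{2h-1}$, have degree at most $(2h-2)(a+b)$, a gap of $a+b\geq 2$; hence $\alpha_1(M^{2h-1})=\alpha_1\bigl(x^{(2h-1)a}(x+1)^{(2h-1)b}\bigr)=\alpha_1\bigl((x+1)^{(2h-1)b}\bigr)$, which is $(2h-1)b \bmod 2 =1$ because $(2h-1)b$ is odd — a two-line computation exploiting the specific shape of $M$. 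You instead extract from Lemma \ref{lesalfal}-i) the general fact that $\alpha_1$ is additive on products of monic polynomials, deduce $\alpha_1(M^n)=n\,\alpha_1(M)$ in $\F_2$ by induction, and finish with $\alpha_1(M^{2h-1})=\alpha_1(M)=b\bmod 2=1$. Both arguments are sound; yours is slightly longer but isolates a reusable multiplicativity statement, while the paper's reads the coefficient off directly from the binomial $(x+1)^{(2h-1)b}$. One small remark: you do not need $\deg M\geq 5$ from Lemma \ref{alfamW}-i) — since $a,b\geq 1$ one has $\deg M\geq 2$, which is all your coefficient computations require, and dropping that citation keeps this lemma independent of the later structural facts.
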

\begin{proof}
i): because $M^{2h}$ is a square.\\
ii): $\alpha_1(M^{2h-1}) = \alpha_1(x^{(2h-1)b} \cdot (x+1)^{(2h-1)b}) = \alpha_1((x+1)^{(2h-1)b})=1$,
since $(2h-1)b$ is odd.
\end{proof}

By direct computations, we get:
\begin{lemma} \label{SplusTsquare}
Let $S, T \in \F_2[x]$ such that $\deg(S)=\deg(T)$.\\
If $\deg(S)$ is even (resp. odd), then $S+T$ is a square if and only if for any odd (resp. even) $\varepsilon$, $\alpha_{\varepsilon}(S) = \alpha_{\varepsilon}(T)$.
\end{lemma}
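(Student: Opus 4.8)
The proof will be a direct unwinding of what it means to be a square in $\F_2[x]$. First I would recall that $\F_2$ is a perfect field, so that a polynomial $P \in \F_2[x]$ is a square if and only if every monomial appearing in $P$ has even exponent; equivalently, $P' = 0$ (since over $\F_2$, $(x^n)' = x^{n-1}$ for $n$ odd and $0$ for $n$ even). Thus, writing $d := \deg(S) = \deg(T)$ and $P := S + T$, the statement ``$P$ is a square'' is equivalent to: the coefficient of $x^k$ in $P$ vanishes for every odd $k$.

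Next I would translate coefficients into the $\alpha$-notation. For a polynomial $Q$ of degree $d$, the coefficient of $x^k$ (for $0 \le k \le d$) is $\alpha_{d-k}(Q)$, and it is $0$ for $k > d$. Since $\deg(S) = \deg(T) = d$, the coefficient of $x^k$ in $P = S+T$ equals $\alpha_{d-k}(S) + \alpha_{d-k}(T)$ for $0 \le k \le d$; in particular the top coefficient cancels, $\alpha_0(S) + \alpha_0(T) = 1 + 1 = 0$, consistently with $\deg(P) < d$. Hence ``$P$ is a square'' is equivalent to: $\alpha_{d-k}(S) = \alpha_{d-k}(T)$ for every odd $k \in \{1, \ldots, d\}$.

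Finally I would split on the parity of $d$ and substitute $\varepsilon = d - k$. If $d$ is even, then $k$ odd is equivalent to $\varepsilon$ odd, and as $k$ ranges over the odd integers in $[1,d]$, $\varepsilon$ ranges over all odd integers in $[1, d-1]$; so the condition reads $\alpha_\varepsilon(S) = \alpha_\varepsilon(T)$ for every odd $\varepsilon$. If $d$ is odd, then $k$ odd is equivalent to $\varepsilon$ even, and $\varepsilon$ ranges over the even integers in $[0, d-1]$; the value $\varepsilon = 0$ gives $\alpha_0(S) = \alpha_0(T) = 1$ automatically, so the condition reads $\alpha_\varepsilon(S) = \alpha_\varepsilon(T)$ for every even $\varepsilon$. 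This is exactly the claimed equivalence. There is no genuine obstacle here; the only points requiring care are the index shift $\varepsilon = d - k$ and the automatic cancellation of the leading coefficients, which is precisely where the hypothesis $\deg(S) = \deg(T)$ enters.
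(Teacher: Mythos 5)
Your proof is correct and is essentially the "direct computation" the paper invokes without writing out: identify squares in $\F_2[x]$ with polynomials having only even exponents, note that the coefficient of $x^k$ in $S+T$ is $\alpha_{d-k}(S)+\alpha_{d-k}(T)$ because $\deg(S)=\deg(T)=d$, and convert the parity of $k$ into the parity of $\varepsilon=d-k$ according to the parity of $d$. The index shift and the automatic agreement at $\varepsilon=0$ are handled correctly, so nothing is missing.
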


\begin{lemma} \label{degS=degT}
Let $S, T \in \F_2[x]$ such that $\deg(S)=\deg(T)$. Then $\deg(S+T) = \deg(S)-\ell$ where $\ell$ is the least integer such that
$\alpha_{\ell}(S) \not= \alpha_{\ell}(T)$.
\end{lemma}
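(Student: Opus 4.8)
The plan is to expand both polynomials in the monomial basis dictated by the $\alpha_l$ notation and simply read off the leading term of the difference. First I would set $s := \deg(S) = \deg(T)$ and recall that, directly from the definition of $\alpha_l$, one has $S = \sum_{l=0}^{s} \alpha_l(S)\, x^{s-l}$ and $T = \sum_{l=0}^{s} \alpha_l(T)\, x^{s-l}$, with $\alpha_0(S) = \alpha_0(T) = 1$. Adding these in $\F_2[x]$ gives $S + T = \sum_{l=0}^{s} \bigl(\alpha_l(S) + \alpha_l(T)\bigr) x^{s-l}$, and since the characteristic is $2$, the coefficient $\alpha_l(S) + \alpha_l(T)$ is zero exactly when $\alpha_l(S) = \alpha_l(T)$.

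Next I would let $\ell$ be the least index with $\alpha_\ell(S) \ne \alpha_\ell(T)$; such an index exists whenever $S \ne T$, and necessarily $\ell \ge 1$ because $\alpha_0(S) = \alpha_0(T) = 1$. By the choice of $\ell$, every coefficient of $x^{s}, x^{s-1}, \dots, x^{s-\ell+1}$ in $S+T$ vanishes, while the coefficient of $x^{s-\ell}$ equals $\alpha_\ell(S) + \alpha_\ell(T) = 1 \neq 0$. Therefore the leading monomial of $S+T$ is $x^{s-\ell}$, i.e.\ $\deg(S+T) = s - \ell = \deg(S) - \ell$, which is the assertion.

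I do not expect any genuine obstacle: the statement is just an unwinding of the definition of the coefficients $\alpha_l$ together with the fact that addition in $\F_2[x]$ is the coordinatewise XOR of coefficient vectors. The only point worth flagging is the degenerate case $S = T$, where $S + T = 0$ and no such $\ell$ exists; this is tacitly excluded (or covered by the convention $\deg 0 = -\infty$), and in the applications that follow the two polynomials being compared are always known to be distinct.
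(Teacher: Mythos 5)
Your proof is correct and is exactly the direct coefficient computation that the paper leaves implicit (the lemma is stated there without proof, as an immediate consequence of the definition of $\alpha_l$ and coefficientwise addition in $\F_2[x]$). Your remark about the degenerate case $S=T$ is a sensible caveat and consistent with how the lemma is used later.
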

\begin{lemma} \label{alphadegM}
Let $S, T \in \F_2[x]$, with $s=\deg(S) > t=\deg(T)$. Then:\\
- For any $l < s-t$, $\alpha_l(S+T) = \alpha_l(S)$.\\
- For any $l \geq s-t$,
$\alpha_l(S+T) = \alpha_l(S)+\alpha_{l-(s-t)}(T)$.
\end{lemma}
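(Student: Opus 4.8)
The statement is a direct consequence of the indexing convention for $\alpha_l$, so the plan is simply to compare coefficients of a fixed power of $x$ in $S$, $T$ and $S+T$. Recall that for a nonzero polynomial $P$ of degree $d$, $\alpha_l(P)$ is by definition the coefficient of $x^{d-l}$ in $P$, for $0 \leq l \leq d$. Since $s = \deg(S) > t = \deg(T)$, the sum $S+T$ has degree exactly $s$; hence, for $0 \leq l \leq s$, $\alpha_l(S+T)$ is the coefficient of $x^{s-l}$ in $S+T$, which by linearity equals the coefficient of $x^{s-l}$ in $S$ plus the coefficient of $x^{s-l}$ in $T$. The former is $\alpha_l(S)$ by definition, and it remains to identify the latter.

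First I would treat the case $l < s-t$: then $s-l > t = \deg(T)$, so $x^{s-l}$ does not occur in $T$, its coefficient there is $0$, and we get $\alpha_l(S+T) = \alpha_l(S)$, the first assertion. For the remaining range $s-t \leq l \leq s$, set $l' = l - (s-t)$; then $0 \leq l' \leq t$ and $s - l = t - l'$, so the coefficient of $x^{s-l}$ in $T$ is exactly $\alpha_{l'}(T) = \alpha_{l-(s-t)}(T)$. This yields $\alpha_l(S+T) = \alpha_l(S) + \alpha_{l-(s-t)}(T)$, the second assertion.

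There is essentially no obstacle here; the statement is a bookkeeping identity complementing Lemma \ref{degS=degT} (the equal-degree case). The only point requiring a line of care is checking that the shifted index $l-(s-t)$ lands in the admissible range $\{0,\ldots,t\}$ so that $\alpha_{l-(s-t)}(T)$ is meaningful, which follows at once from $s-t \leq l \leq s$.
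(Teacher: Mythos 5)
Your proof is correct, and it coincides with the paper's treatment: the paper states this lemma without proof as a direct computation from the definition of $\alpha_l$, and your coefficient-by-coefficient comparison (noting $\deg(S+T)=s$ and reindexing via $t-(s-l)=l-(s-t)$) is exactly that routine verification.
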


\begin{corollary} \label{alfaplusb}
i) If $a+b$ is odd, then $\alpha_{a+b}(M^{2h}+M^{2h-1}) =1$.\\
ii) If $a+b$ is even, then $\alpha_{a+b+1}(M^{2h}+M^{2h-1}) =1$.\\
iii)  If $a+b$ is even, then $\alpha_1(\sigma(M^{2h-1})) = 1$.
\end{corollary}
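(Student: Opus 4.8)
The plan is to reduce each of the three assertions to the coefficient lemmas already available (Lemmas \ref{alphaM2h}, \ref{alphadegM} and \ref{lesalfal}), together with the elementary parity constraint on the pair $(a,b)$.

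First I would record the relevant degrees: since $\deg_M=a+b\geq 5$ by Lemma \ref{alfamW}-i), we have $\deg(M^{2h})=2h(a+b)>(2h-1)(a+b)=\deg(M^{2h-1})$, the gap being exactly $\deg_M=a+b\geq 2$. Applying Lemma \ref{alphadegM} with $S=M^{2h}$ and $T=M^{2h-1}$ gives, for every $l\geq a+b$,
$$\alpha_l\bigl(M^{2h}+M^{2h-1}\bigr)=\alpha_l(M^{2h})+\alpha_{l-(a+b)}(M^{2h-1}).$$
For i) I specialize $l=a+b$: the last term is $\alpha_0(M^{2h-1})=1$, while $\alpha_{a+b}(M^{2h})=0$ by Lemma \ref{alphaM2h}-i) because $a+b$ is odd, whence the sum is $1$. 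For ii) I specialize $l=a+b+1$: then $\alpha_{a+b+1}(M^{2h})=0$ by Lemma \ref{alphaM2h}-i) since $a+b+1$ is odd, so the claim reduces to $\alpha_1(M^{2h-1})=1$.

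This is the one point that is not purely mechanical (and hence the closest thing to an obstacle): when $a+b$ is even, $a$ and $b$ have the same parity, and since $\gcd(a,b)=1$ they cannot both be even, so both are odd; Lemma \ref{alphaM2h}-ii) then gives $\alpha_1(M^{2h-1})=1$, finishing ii). For iii) I would note that $M^{2h-1}$ is a power of the irreducible $M$ of degree $a+b\geq 5$, so no irreducible polynomial of degree $\leq a+b-1$ divides it; hence Lemma \ref{lesalfal}-ii) (applied with $r=a+b-1$ and $l=1$) yields $\alpha_1(\sigma(M^{2h-1}))=\alpha_1(M^{2h-1})$, which is again $1$ by the same parity argument. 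Since the whole computation is this short, there is no genuine difficulty; the substantive inputs are Lemma \ref{alphaM2h} and the observation that $a+b$ even forces $a$ and $b$ to be odd.
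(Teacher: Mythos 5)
Your proof is correct and follows essentially the same route as the paper: apply Lemma \ref{alphadegM} with $S=M^{2h}$, $T=M^{2h-1}$, use that $M^{2h}$ is a square to kill the odd-index coefficients, and deduce $\alpha_1(M^{2h-1})=1$ from $b$ odd via Lemma \ref{alphaM2h}-ii). Your only addition is to justify $\alpha_1(\sigma(M^{2h-1}))=\alpha_1(M^{2h-1})$ explicitly through Lemma \ref{lesalfal}-ii), a step the paper leaves implicit.
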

\begin{proof} We apply Lemma \ref{alphadegM} with $S=M^{2h}$, $T=M^{2h-1}$. We get $s=2h(a+b)$ and $t=(2h-1)(a+b)$ so that
$s-t= a+b$. Remark that for any odd integer $l$, $\alpha_{l}(S) = 0,$ since $S$ is a square.\\
i) if $a+b$ is odd, then $\alpha_{a+b}(S+T) = \alpha_{a+b}(S)+\alpha_{0}(T) =0+1=1.$\\
ii) if $a+b$ is even, then $a$ and $b$ are both odd. Thus, $\alpha_{a+b+1}(M^{2h}+M^{2h-1}) = \alpha_{a+b+1}(S) + \alpha_{1}(T) = 0+\alpha_1(M^{2h-1}) = 1$, since $b$ is odd (Lemma \ref{alphaM2h}-ii)).\\
iii):  $b$ is odd and $\alpha_1(\sigma(M^{2h-1})) = \alpha_1(M^{2h-1}) = 1$ (Lemma \ref{alphaM2h}-ii)).
\end{proof}
\begin{corollary} \label{integerc}
The integer $c$ is the least one such that $\alpha_c(U_{2h}) + \alpha_c(\sigma(M^{2h}))~=~1$.
\end{corollary}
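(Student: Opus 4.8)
\emph{Plan.} The plan is to recognize the defining relation $\deg(W)=2h\deg_M-c$ as an instance of Lemma~\ref{degS=degT} applied to the pair $S=U_{2h}$, $T=\sigma(M^{2h})$, whose $\alpha$-coefficients are then compared.

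First, by Corollary~\ref{sommaieven} we have $U_{2h}=x^u(x+1)^v$ with $u=\sum_{j\in J}a_j$ and $v=\sum_{j\in J}b_j$, so $\deg(U_{2h})=u+v=\sum_{j\in J}(a_j+b_j)=\deg\bigl(\sigma(M^{2h})\bigr)=2h\deg_M$; thus $U_{2h}$ and $\sigma(M^{2h})$ are monic of the same degree. Put $P:=U_{2h}+\sigma(M^{2h})$, so that $W=P+1$. Cancellation of leading terms gives $\deg(P)\le 2h\deg_M-1$. Moreover $P$ is not a constant: otherwise $W=P+1$ would be a constant, hence (being nonzero by Lemma~\ref{alfamW}-ii) equal to the square $1$, contradicting that $W$ is not a square. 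Hence $\deg(P)\ge 1$ and, in particular, $P\ne 0$, i.e. $U_{2h}\ne\sigma(M^{2h})$; moreover, since $P$ has positive degree, adding the constant $1$ does not change it, so $\deg(W)=\deg(P)=\deg\bigl(U_{2h}+\sigma(M^{2h})\bigr)$.

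Now apply Lemma~\ref{degS=degT} with $S=U_{2h}$, $T=\sigma(M^{2h})$ (legitimate, since $\deg S=\deg T=2h\deg_M$ and $S\ne T$): one gets $\deg(W)=2h\deg_M-\ell$, where $\ell$ is the least integer with $\alpha_\ell(U_{2h})\ne\alpha_\ell(\sigma(M^{2h}))$, equivalently (working over $\F_2$) the least integer with $\alpha_\ell(U_{2h})+\alpha_\ell(\sigma(M^{2h}))=1$. Comparing with $c=2h\deg_M-\deg(W)$ forces $c=\ell$, which is exactly the assertion. The only point requiring any care is the identity $\deg(W)=\deg\bigl(U_{2h}+\sigma(M^{2h})\bigr)$, i.e. that the additive constant $1$ does not lower the degree; this rests on Lemma~\ref{alfamW}-ii) (that $W$ is nonzero and not a square). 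There is no substantial obstacle otherwise.
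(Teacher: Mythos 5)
Your proof is correct and takes essentially the same route as the paper, which applies Lemma~\ref{degS=degT} with $S=U_{2h}$ and $T=\sigma(M^{2h})+1$ so that $S+T=W$ directly and $\deg(W)=2h\deg_M-c$ identifies $c$ as the least index where the coefficients differ. Your choice $T=\sigma(M^{2h})$ merely shifts the bookkeeping of the additive constant $1$, and you check (somewhat more explicitly than the paper) that this constant cannot lower the degree, using Lemma~\ref{alfamW}-ii); there is no gap.
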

\begin{proof}
Apply Lemma \ref{degS=degT} with $S = U_{2h}$ and $T = \sigma(M^{2h}) + 1$, since $\deg(S+T) =\deg(W) =  2h \deg(M) - c = \deg(S)-c$.
\end{proof}

\section{The proof} \label{theproof}
We should consider four cases, since $M \not\in \Sigma_p$. But, Corollary \ref{candm} implies that Case IV does not happen.\\
\\
Case I: $c \geq a+b +2$\\
Case II: $c+m < a+b$\\
Case III: $c<a+b<c+m$\\
Case IV: ($c = a+b+1$, with $c$ even) or ($c+m = a+b$, with $c$ odd) or ($c+m = a+b+1$, with $c$ odd and $m\geq 3$).
\subsection{I: $c \geq a+b +2$}
\begin{lemma} One has:\\
i) $\alpha_{a+b}(U_{2h}) =1$ if $a+b$ is odd.\\
ii) $\alpha_{a+b+1}(U_{2h}) =1$ if $a+b$ is even.
\end{lemma}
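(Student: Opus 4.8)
The statement is purely about the low-degree coefficients $\alpha_{a+b}(U_{2h})$ and $\alpha_{a+b+1}(U_{2h})$, and the hypothesis $c\geq a+b+2$ is exactly what lets us replace these by the corresponding coefficients of $\sigma(M^{2h})$. So the plan is a three-step transfer: first from $U_{2h}$ to $\sigma(M^{2h})$ via Corollary \ref{integerc}, then from $\sigma(M^{2h})$ to $M^{2h}+M^{2h-1}$ via Lemma \ref{alfasigmM2h}, and finally an explicit evaluation of $\alpha_{a+b}$ or $\alpha_{a+b+1}$ of $M^{2h}+M^{2h-1}$ via Corollary \ref{alfaplusb}.

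\textbf{Step 1 (transfer $U_{2h}\rightsquigarrow\sigma(M^{2h})$).} By Corollary \ref{integerc}, $c$ is the least integer with $\alpha_c(U_{2h})+\alpha_c(\sigma(M^{2h}))=1$, i.e. the least index at which the two coefficient sequences differ. Since $c\geq a+b+2$, every index $l$ with $l\leq a+b+1$ satisfies $l<c$, hence $\alpha_l(U_{2h})=\alpha_l(\sigma(M^{2h}))$. In particular this holds for $l=a+b$ and for $l=a+b+1$.

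\textbf{Step 2 (transfer $\sigma(M^{2h})\rightsquigarrow M^{2h}+M^{2h-1}$).} By Lemma \ref{alfamW}-i) we have $a+b=\deg_M\geq 5$, so both $a+b$ and $a+b+1$ lie in the range $a+b\leq l\leq 2(a+b)-1$ (the right inequality amounts to $a+b\geq 2$). Lemma \ref{alfasigmM2h} then gives $\alpha_{a+b}(\sigma(M^{2h}))=\alpha_{a+b}(M^{2h}+M^{2h-1})$ and $\alpha_{a+b+1}(\sigma(M^{2h}))=\alpha_{a+b+1}(M^{2h}+M^{2h-1})$.

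\textbf{Step 3 (conclude).} Combining Steps 1 and 2: if $a+b$ is odd, then $\alpha_{a+b}(U_{2h})=\alpha_{a+b}(M^{2h}+M^{2h-1})=1$ by Corollary \ref{alfaplusb}-i), which is assertion i); if $a+b$ is even, then $\alpha_{a+b+1}(U_{2h})=\alpha_{a+b+1}(M^{2h}+M^{2h-1})=1$ by Corollary \ref{alfaplusb}-ii), which is assertion ii). There is no real obstacle here — every ingredient has already been established — the only thing to be careful about is checking that the indices $a+b$ and $a+b+1$ genuinely fall strictly below $c$ (needs $c\geq a+b+2$, the Case I hypothesis) and within the validity range of Lemma \ref{alfasigmM2h} (needs $\deg_M\geq 5$, from Lemma \ref{alfamW}-i)). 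Note that, read against Corollary \ref{sommaieven}-iii), each conclusion asserts that an \emph{odd}-index coefficient of the square $U_{2h}$ equals $1$, so this lemma is precisely the source of the contradiction that disposes of Case I.
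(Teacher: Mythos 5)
Your proof is correct and follows essentially the same route as the paper: transfer $\alpha_l(U_{2h})=\alpha_l(\sigma(M^{2h}))$ for $l=a+b,\,a+b+1<c$, then pass to $M^{2h}+M^{2h-1}$ via Lemma \ref{alfasigmM2h}, and conclude with Corollary \ref{alfaplusb}. The only cosmetic difference is that you invoke Corollary \ref{integerc} for the first transfer, whereas the paper applies Lemma \ref{alphadegM} directly to the decomposition $U_{2h}=\sigma(M^{2h})+(W+1)$ with degree gap $c$ — the same fact in two guises.
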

\begin{proof} Write: $U_{2h} = (\sigma(M^{2h})) + (W+1) = S + T$, with $\deg(S) - \deg(T)=c$.\\
i): One has: $\text{$a+b, a+b+1 < c$ and $a+b, a+b+1 < 2(a+b)-1$.}$\\
By Lemma \ref{alphadegM} and Corollary \ref{alfaplusb}, we get:\\
- $\alpha_{a+b}(U_{2h})=\alpha_{a+b}(\sigma(M^{2h})) = \alpha_{a+b}(M^{2h}+M^{2h-1}) =1$, if $a+b$ is odd,\\
- $\alpha_{a+b+1}(U_{2h})=\alpha_{a+b+1}(\sigma(M^{2h})) = \alpha_{a+b+1}(M^{2h}+M^{2h-1})=1$, if $a+b$ is even.
\end{proof}

\subsection{II: $c+m < a+b$}
Since $c < a+b$, Corollary \ref{aboutceab} implies that $c$ and thus $\deg(W)$ are even.\\
We consider the odd integer $m$ (of Lemma \ref{alfamW}) such that: $\alpha_m(W) = 1$.\\
We recall that :\\
$W =1+\sigma(M^{2h}) + U_{2h} =1+\sigma(M^{2h}) + x^u(x+1)^v$,\\
$R=\sigma(M^{2h-1}) + W$ is a square.\\
One has: $\text{$c +m= 2h \deg_M-\deg(W) +m$ is odd.}$
\begin{lemma}
$\alpha_{c+m}(U_{2h}) = 1$.
\end{lemma}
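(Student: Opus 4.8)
The plan is to expand $U_{2h}$ as a sum of two polynomials of controlled degrees and then read off the coefficient at position $c+m$ via Lemma~\ref{alphadegM}. Concretely, I would write $U_{2h}=\sigma(M^{2h})+(W+1)$. Here $\deg(\sigma(M^{2h}))=2h(a+b)$ while $\deg(W)=2h\deg_M-c=2h(a+b)-c$, and in Case~II we have $c<a+b$, hence $c\geq 1$ (by Corollary~\ref{integerc}, since $\alpha_0(U_{2h})=\alpha_0(\sigma(M^{2h}))=1$) and $\deg(W)\geq 1$; thus $\deg(W+1)=\deg(W)$ and $\deg(\sigma(M^{2h}))-\deg(W+1)=c$.

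Applying Lemma~\ref{alphadegM} with $S=\sigma(M^{2h})$, $T=W+1$, $s-t=c$ and $l=c+m\geq c$ gives $\alpha_{c+m}(U_{2h})=\alpha_{c+m}(\sigma(M^{2h}))+\alpha_m(W+1)$. For the second summand, $1\leq m\leq (a+b)-c-1<2h(a+b)-c=\deg(W)$, so adding the constant $1$ does not affect the coefficient of $x^{\deg(W)-m}$, whence $\alpha_m(W+1)=\alpha_m(W)=1$ by the definition of $m$ (Lemma~\ref{alfamW}-iv)). For the first summand, the Case~II hypothesis $c+m<a+b$ gives $1\leq c+m\leq a+b-1$, so Lemma~\ref{alfasigmM2h} yields $\alpha_{c+m}(\sigma(M^{2h}))=\alpha_{c+m}(M^{2h})$; since $c$ is even (Corollary~\ref{aboutceab}) and $m$ is odd, $c+m$ is odd, and $M^{2h}$ is a square, so $\alpha_{c+m}(M^{2h})=0$ by Lemma~\ref{alphaM2h}-i). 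Adding the two contributions gives $\alpha_{c+m}(U_{2h})=0+1=1$.

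No step here is genuinely hard; the argument is pure bookkeeping with degrees. The three points that require care are exactly: that $c\geq 1$ (so $W+1$ really has smaller degree than $\sigma(M^{2h})$ and Lemma~\ref{alphadegM} applies), that $m<\deg(W)$ (so replacing $W$ by $W+1$ in position $m$ is harmless), and that $c+m\leq a+b-1$ (the Case~II hypothesis, which is what lets us replace $\sigma(M^{2h})$ by $M^{2h}$ in position $c+m$). The payoff is that $c+m$ is odd, so this equality will contradict Corollary~\ref{sommaieven}-iii), which is how Case~II gets eliminated.
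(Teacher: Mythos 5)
Your proof is correct, and it reaches the same contradiction the paper intends (an odd index $c+m$ with $\alpha_{c+m}(U_{2h})=1$, against Corollary~\ref{sommaieven}-iii)), but via a different splitting. The paper writes $U_{2h}=M^{2h}+\bigl(W+\sigma(M^{2h-1})+1\bigr)$ and applies Lemma~\ref{alphadegM} twice: once to this decomposition (the gap in degrees is again $c$, and $\alpha_{c+m}(M^{2h})=0$ since $M^{2h}$ is a square and $c+m$ is odd), and once inside the second summand, using the Case~II inequality in the form $m<a+b-c=\deg(W)-\deg(\sigma(M^{2h-1})+1)$ to get $\alpha_m\bigl(W+\sigma(M^{2h-1})+1\bigr)=\alpha_m(W)=1$. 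You instead write $U_{2h}=\sigma(M^{2h})+(W+1)$, apply Lemma~\ref{alphadegM} once, and dispose of the term $\alpha_{c+m}(\sigma(M^{2h}))$ by Lemma~\ref{alfasigmM2h}, where the Case~II inequality enters as $c+m\leq a+b-1$; the term $\alpha_m(W+1)=\alpha_m(W)=1$ is then immediate. The two routes are of equal length and use the same hypotheses, just packaged differently: the paper's choice isolates the square $M^{2h}$ at the outset so that no appeal to Lemma~\ref{alfasigmM2h} is needed, while yours keeps $\sigma(M^{2h})$ intact and avoids the nested application of Lemma~\ref{alphadegM}. A minor plus of your write-up is that you verify explicitly the degree conditions the paper leaves tacit ($c\geq 1$, so $\deg(W+1)<\deg(\sigma(M^{2h}))$, and $m<\deg(W)$).
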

\begin{proof}
Write: $U_{2h} = S+T$ with $S=M^{2h}$ and $T=W+\sigma(M^{2h-1}) + 1$.\\
One has: $\deg(S)-\deg(T) = 2h(a+b) - \deg(W) = c$.\\
Lemma \ref{alphadegM} implies:
$$\alpha_{c+m}(U_{2h}) = \alpha_{c+m}(M^{2h}) + \alpha_m(T) = 0+\alpha_m(T) = \alpha_m(T).$$
But, $m < a+b-c = \deg(W)-\deg(\sigma(M^{2h-1}) + 1)$. Again, from Lemma \ref{alphadegM}, one has: $\alpha_m(T) = \alpha_m(W)=1$.
So, $\alpha_{c+m}(U_{2h}) =1$.
\end{proof}
\subsection{III: $c<a+b<c+m$}
As above, $c$ and thus $\deg(W)$ are even.
\begin{lemma}
If $c+m > a+b$ and $a+b$ odd, then $\alpha_{a+b-c}(R) =1$.
\end{lemma}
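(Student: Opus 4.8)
The plan is to compute $\alpha_{a+b-c}(R)$ directly from the relation $R = \sigma(M^{2h-1}) + W$ by a degree comparison, and to show that the minimality of $m$ forces the value to be $1$.

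First I would pin down the index $a+b-c$. As already noted in this subsection, $c$ (hence $\deg(W)$) is even; together with the hypothesis that $a+b$ is odd this makes $a+b-c$ an odd positive integer (positive since $c<a+b$). By Lemma \ref{alfamW}-iv, $m$ is the \emph{least} odd integer with $\alpha_m(W)=1$, and the hypothesis $c+m>a+b$ reads $m>a+b-c$. Hence $a+b-c$ is an odd positive integer strictly below $m$, so $\alpha_{a+b-c}(W)=0$.

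Then I would apply Lemma \ref{alphadegM} to $R = W + \sigma(M^{2h-1})$. By definition $\deg(W) = 2h\deg_M - c$, while $\deg(\sigma(M^{2h-1})) = (2h-1)\deg_M$; since $c<\deg_M$ we get $\deg(W) > \deg(\sigma(M^{2h-1}))$, with difference $s-t = \deg_M - c = a+b-c$. Choosing $l = a+b-c = s-t$ in Lemma \ref{alphadegM} gives
$$\alpha_{a+b-c}(R) = \alpha_{a+b-c}(W) + \alpha_0(\sigma(M^{2h-1})) = 0+1 = 1,$$
which is what we want.

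The argument is short; the point deserving care is the strict inequality $\deg(W) > \deg(\sigma(M^{2h-1}))$, which is what lets Lemma \ref{alphadegM} apply in the displayed form (rather than the equal-degree Lemma \ref{degS=degT}). It is immediate from $c<a+b$, but it is the crux of the proof: taking $l$ exactly equal to $s-t$ makes the term $\alpha_0(\sigma(M^{2h-1}))=1$ appear, while that same choice of index annihilates the $W$-contribution via the minimality of $m$. If $a+b$ were even this last cancellation would fail, which is why that hypothesis is essential.
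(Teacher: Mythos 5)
Your proof is correct and follows essentially the same route as the paper: the same decomposition $R = W + \sigma(M^{2h-1})$, the same application of Lemma \ref{alphadegM} at the index $l = a+b-c$ equal to the degree gap, and the same use of the minimality of $m$ (via $c+m>a+b$) to kill the $W$-term. You merely make explicit the parity point (that $a+b-c$ is odd because $c$ is even) which the paper leaves implicit.
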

\begin{proof} Set: $R =S+T$ with $S = W$ and $T = \sigma(M^{2h-1})$. \\
One has: $\deg(S) - \deg(T) = a+b-c$ and $a+b-c < m$.\\
Therefore,\\
$\alpha_{a+b-c}(W) = 0$ and  $\alpha_{a+b-c}(R) = \alpha_{a+b-c}(W) + \alpha_0(\sigma(M^{2h-1})) = 0+1=1$.
\end{proof}
\begin{lemma}
If $c+m > a+b+1$ and $a+b$ even, then $\alpha_{a+b-c+1}(R) =1$.
\end{lemma}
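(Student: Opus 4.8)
The plan is to mimic almost verbatim the proof of the preceding lemma (the case $a+b$ odd), now exploiting the parities of $a+b$ and of $c$ rather than merely the inequality $a+b-c<m$.

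First I would write $R=S+T$ with $S=W$ and $T=\sigma(M^{2h-1})$. Since we are in Case III we have $c<a+b$, hence $\deg(W)=2h\deg_M-c>(2h-1)(a+b)=\deg(\sigma(M^{2h-1}))$, and more precisely $\deg(S)-\deg(T)=a+b-c>0$. This positivity is exactly what is needed to apply Lemma \ref{alphadegM} at the index $a+b-c+1$, which satisfies $a+b-c+1\geq a+b-c$, giving
$$\alpha_{a+b-c+1}(R)=\alpha_{a+b-c+1}(W)+\alpha_{1}(\sigma(M^{2h-1})).$$

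The two terms on the right are then handled separately. For the first: in this case $a+b$ is even and (by Corollary \ref{aboutceab}, as already recorded at the start of the subsection) $c$ is even, so $a+b-c+1$ is odd; moreover the hypothesis $c+m>a+b+1$ forces $a+b-c+1<m$. Since $m$ is the \emph{least} odd positive integer with $\alpha_m(W)=1$ (Lemma \ref{alfamW}-iv)), it follows that $\alpha_{a+b-c+1}(W)=0$. For the second term, $\alpha_{1}(\sigma(M^{2h-1}))=1$ is precisely Corollary \ref{alfaplusb}-iii), valid because $a+b$ is even. Combining, $\alpha_{a+b-c+1}(R)=0+1=1$.

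I expect essentially no obstacle here: the only points needing care are the parity bookkeeping (that $a+b-c+1$ is odd and strictly below $m$) and checking that the index $a+b-c+1$ lies in the admissible range for Lemma \ref{alphadegM}, i.e.\ between $a+b-c$ and $\deg(W)=2h\deg_M-c$, which is immediate since the latter is far larger than $a+b+1$. The conclusion $\alpha_{a+b-c+1}(R)=1$ with $a+b-c+1$ odd is exactly the contradiction to Lemma \ref{alfamW}-iii) (which asserts that $R$ is a square) that rules out this sub-case of Case III.
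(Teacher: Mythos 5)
Your proof is correct and follows essentially the same route as the paper: the same decomposition $R=W+\sigma(M^{2h-1})$, the same application of Lemma \ref{alphadegM} at the index $a+b-c+1$, with $\alpha_{a+b-c+1}(W)=0$ from the minimality of $m$ (using $a+b-c+1<m$ odd) and $\alpha_1(\sigma(M^{2h-1}))=1$ from Corollary \ref{alfaplusb}-iii). You merely make explicit the parity bookkeeping that the paper leaves implicit; no gap.
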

\begin{proof} As above, set: $S = W$ and $T = \sigma(M^{2h-1})$. \\
One has: $\deg(S) - \deg(T) = a+b-c$ and $a+b-c+1 < m$.\\
By Corollary \ref{alfaplusb}, $\alpha_1(\sigma(M^{2h-1}))=1$. \\
So, $\alpha_{a+b-c+1}(R) = \alpha_{a+b-c+1}(W) + \alpha_1(\sigma(M^{2h-1})) = 0+1=1$.
\end{proof}

\def\biblio{\def\titrebibliographie{References}\thebibliography}
\let\endbiblio=\endthebibliography




\newbox\auteurbox
\newbox\titrebox
\newbox\titrelbox
\newbox\editeurbox
\newbox\anneebox
\newbox\anneelbox
\newbox\journalbox
\newbox\volumebox
\newbox\pagesbox
\newbox\diversbox
\newbox\collectionbox
\def\fabriquebox#1#2{\par\egroup
\setbox#1=\vbox\bgroup \leftskip=0pt \hsize=\maxdimen \noindent#2}
\def\bibref#1{\bibitem{#1}


\setbox0=\vbox\bgroup}
\def\auteur{\fabriquebox\auteurbox\styleauteur}
\def\titre{\fabriquebox\titrebox\styletitre}
\def\titrelivre{\fabriquebox\titrelbox\styletitrelivre}
\def\editeur{\fabriquebox\editeurbox\styleediteur}

\def\journal{\fabriquebox\journalbox\stylejournal}

\def\volume{\fabriquebox\volumebox\stylevolume}
\def\collection{\fabriquebox\collectionbox\stylecollection}
{\catcode`\- =\active\gdef\annee{\fabriquebox\anneebox\catcode`\-
=\active\def -{\hbox{\rm
\string-\string-}}\styleannee\ignorespaces}}
{\catcode`\-
=\active\gdef\anneelivre{\fabriquebox\anneelbox\catcode`\-=
\active\def-{\hbox{\rm \string-\string-}}\styleanneelivre}}
{\catcode`\-=\active\gdef\pages{\fabriquebox\pagesbox\catcode`\-
=\active\def -{\hbox{\rm\string-\string-}}\stylepages}}
{\catcode`\-
=\active\gdef\divers{\fabriquebox\diversbox\catcode`\-=\active
\def-{\hbox{\rm\string-\string-}}\rm}}
\def\ajoutref#1{\setbox0=\vbox{\unvbox#1\global\setbox1=
\lastbox}\unhbox1 \unskip\unskip\unpenalty}
\newif\ifpreviousitem
\global\previousitemfalse
\def\separateur{\ifpreviousitem {,\ }\fi}
\def\voidallboxes
{\setbox0=\box\auteurbox \setbox0=\box\titrebox
\setbox0=\box\titrelbox \setbox0=\box\editeurbox
\setbox0=\box\anneebox \setbox0=\box\anneelbox
\setbox0=\box\journalbox \setbox0=\box\volumebox
\setbox0=\box\pagesbox \setbox0=\box\diversbox
\setbox0=\box\collectionbox \setbox0=\null}
\def\fabriquelivre
{\ifdim\ht\auteurbox>0pt
\ajoutref\auteurbox\global\previousitemtrue\fi
\ifdim\ht\titrelbox>0pt
\separateur\ajoutref\titrelbox\global\previousitemtrue\fi
\ifdim\ht\collectionbox>0pt
\separateur\ajoutref\collectionbox\global\previousitemtrue\fi
\ifdim\ht\editeurbox>0pt
\separateur\ajoutref\editeurbox\global\previousitemtrue\fi
\ifdim\ht\anneelbox>0pt \separateur \ajoutref\anneelbox
\fi\global\previousitemfalse}
\def\fabriquearticle
{\ifdim\ht\auteurbox>0pt        \ajoutref\auteurbox
\global\previousitemtrue\fi \ifdim\ht\titrebox>0pt
\separateur\ajoutref\titrebox\global\previousitemtrue\fi
\ifdim\ht\titrelbox>0pt \separateur{\rm in}\
\ajoutref\titrelbox\global \previousitemtrue\fi
\ifdim\ht\journalbox>0pt \separateur
\ajoutref\journalbox\global\previousitemtrue\fi
\ifdim\ht\volumebox>0pt \ \ajoutref\volumebox\fi
\ifdim\ht\anneebox>0pt  \ {\rm(}\ajoutref\anneebox \rm)\fi
\ifdim\ht\pagesbox>0pt
\separateur\ajoutref\pagesbox\fi\global\previousitemfalse}
\def\fabriquedivers
{\ifdim\ht\auteurbox>0pt
\ajoutref\auteurbox\global\previousitemtrue\fi
\ifdim\ht\diversbox>0pt \separateur\ajoutref\diversbox\fi}
\def\endbibref
{\egroup \ifdim\ht\journalbox>0pt \fabriquearticle
\else\ifdim\ht\editeurbox>0pt \fabriquelivre
\else\ifdim\ht\diversbox>0pt \fabriquedivers \fi\fi\fi.\voidallboxes}

\let\styleauteur=\sc
\let\styletitre=\it
\let\styletitrelivre=\sl
\let\stylejournal=\rm
\let\stylevolume=\bf
\let\styleannee=\rm
\let\stylepages=\rm
\let\stylecollection=\rm
\let\styleediteur=\rm
\let\styleanneelivre=\rm

\begin{biblio}{99}

\begin{bibref}{Gall-Rahav12}
\auteur{L. H. Gallardo, O. Rahavandrainy} \titre{On even (unitary) perfect
polynomials over $\F_{2}$ } \journal{Finite Fields Appl.} \volume{18} \pages 920-932 \annee 2012
\end{bibref}

\begin{bibref}{Gall-Rahav-mersenn}
\auteur{L. H. Gallardo, O. Rahavandrainy} \titre{On Mersenne
polynomials over $\F_{2}$} \journal{Finite Fields Appl.}
\volume{59} \pages 284-296 \annee 2019
\end{bibref}

\begin{bibref}{Gall-Rahav14}
\auteur{L. H. Gallardo, O. Rahavandrainy} \titre{On (unitary) perfect polynomials over $\F_2$ with only Mersenne primes as
odd divisors} \journal{arXiv: 1908.00106v1} \annee Jul. 2019
\end{bibref}

\end{biblio}
\end{document}